\newcommand{\la}{\lambda}
\newcommand{\al}{\alpha}
\newcommand{\fy}{\varphi}
\newcommand{\pd}{\partial}
\newcommand{\p}{\partial}
\newcommand{\I}{\infty}
\newcommand{\ti}{\widetilde}
\newcommand{\R}{\mathbb{R}}
\newcommand{\C}{\mathbb{C}}
\newcommand{\N}{\mathbb{N}}
\newcommand{\Z}{\mathbb{Z}}
\newcommand{\F}{\mathcal{F}}
\renewcommand{\S}{\mathbb{S}}
\renewcommand{\Re}{\mathop{\mathrm{Re}}}
\renewcommand{\Im}{\mathop{\mathrm{Im}}}
\renewcommand{\bar}{\overline}
\renewcommand{\hat}{\widehat}
\renewcommand{\r}{\rho}
\numberwithin{equation}{section}
\newtheorem{thm}{Theorem}[section]
\newtheorem{cor}[thm]{Corollary}
\newtheorem{lem}[thm]{Lemma}
\theoremstyle{remark}
\newtheorem{rem}{Remark}
\newcommand{\ran}{\rangle}
\newcommand{\lan}{\langle}
\newcommand{\lec}{\lesssim}
\newcommand{\gec}{\gtrsim}
\newcommand{\EQ}[1]{\begin{equation} \begin{split} #1
 \end{split} \end{equation}}
\newcommand{\Del}[1]{}
\newcommand{\CAS}[1]{\begin{cases} #1 \end{cases}}
\newcommand{\pt}{&}
\newcommand{\pr}{\\ &}
\newcommand{\pq}{\quad}
\newcommand{\pn}{}
\newcommand{\prQ}{\\ &\qquad}
\newcommand{\LR}[1]{{\lan #1 \ran}}
\newcommand{\de}{\delta}
\renewcommand{\t}{\tau}
\newcommand{\be}{\beta}
\newcommand{\ga}{\gamma}
\newcommand{\x}{\xi}
\newcommand{\s}{\sigma}
\newcommand{\na}{\nabla}
\renewcommand{\th}{\theta}
\newcommand{\supp}{\operatorname{supp}}
\newcommand{\D}{\mathcal{D}}
\newcommand{\E}{\mathcal{E}}
\newcommand{\De}{\Delta}
\newcommand{\PX}[2]{\big\langle #1 \big| #2 \big\rangle_x}
\newcommand{\IN}[1]{\text{ in }#1}
\newcommand{\mat}[1]{\begin{pmatrix} #1 \end{pmatrix}}
\newcommand{\M}[1]{\vec{#1}}
\newcommand{\V}[1]{{\bf #1}}
\newcommand{\Sc}[1]{{\mathcal{#1}}}
\newcommand{\vdt}{\cdot}
\newcommand{\cdt}{\,\overset{{}_\circ}{}\,}
\newcommand{\res}{\check} 
\newcommand{\hmu}{\M{h}[\mu]}
\newcommand{\rs}[1]{{#1}^{\not s}}
\newcommand{\Si}{\Sigma}
\newcommand{\Tr}[1]{{}^t\!{#1}}
\newcommand{\ip}[2]{(#1 \mid #2)}
\begin{document}
\title[Stability and oscillation of harmonic maps]
{Asymptotic stability, concentration, and \\
oscillation in harmonic map heat-flow, \\
Landau-Lifshitz, and Schr\"odinger maps on $\R^2$}

\author{Stephen Gustafson}
\author{Kenji Nakanishi}
\author{Tai-Peng Tsai}

\begin{abstract}
We consider the Landau-Lifshitz equations of ferromagnetism 
(including the harmonic map heat-flow and Schr\"odinger flow 
as special cases) for degree $m$ equivariant maps from $\R^2$ 
to $\S^2$. If $m \geq 3$, we prove that near-minimal energy 
solutions converge to a harmonic map as $t \to \I$ ({\it asymptotic 
stability}), extending previous work \cite{GKT2} down to degree $m = 3$. 
Due to slow spatial decay of the harmonic map components, a new 
approach is needed for $m=3$, involving (among other tools) a ``normal 
form'' for the parameter dynamics, and the 2D radial double-endpoint 
Strichartz estimate for Schr\"odinger operators with sufficiently 
repulsive potentials (which may be of some independent interest). 
When $m=2$ this asymptotic stability may fail: in the 
case of heat-flow with a further symmetry restriction, 
we show that more exotic asymptotics are possible, including 
infinite-time concentration (blow-up), and even ``eternal oscillation''. 
\end{abstract}

\maketitle

\tableofcontents

\section{Introduction and Results}

The {\it Landau-Lifshitz} (sometimes {\it Landau-Lifshitz-Gilbert})
equation describing the dynamics of an 2D isotropic ferromagnet 
is (eg. \cite{IK})
\EQ{  \label{LL}
  \M{u}_t = a_1 (\De \M{u} + |\nabla \M{u}|^2 \M{u})
  + a_2 \M{u} \times \De \M{u},  
  \quad\quad a_1 \geq 0, \;\; a_2 \in \R }
where the {\it magnetization vector} 
$\M{u} = \M{u}(t,x) = (u_1, u_2, u_2)$
is a $3$-vector with normalized length, so can be considered
a map into the $2$-sphere $\S^2$:
\EQ{
  \M{u} : [0,T) \times \R^2 \to
  \S^2 := \{ \M{u} \in \R^3 \; | \; |\M{u}| = 1 \}. }
The special case $a_2 = 0$ of~\eqref{LL} is the very well-studied 
{\it harmonic map heat-flow} into $\S^2$, while
the special case $a_1 = 0$ is known as the {\it Schr\"odinger flow}
(or {\it Schr\"odinger map}) equation, the geometric generalization
of the linear Schr\"odinger equation for maps into the K\"ahler
manifold $\S^2$.  

In order to exhibit the simple geometry of~\eqref{LL} more clearly, 
we introduce, for $\M{u} \in \S^2$, the tangent space
\EQ{
  T_{\M{u}} \S^2 := \M{u}^\perp =
  \{ \M{\xi} \in \R^3 \; | \; \M{u} \cdot \M{\xi} = 0 \} }
to the sphere $\S^2$ at $\M{u}$.
For any vector $\M{v} \in \R^3$, we define two operations 
on vectors:
\EQ{
  J^{\M{v}} := \M{v} \times, \quad\quad
  P^{\M{v}} := -J^{\M{v}} J^{\M{v}}. }
For $\M{u} \in \S^2$, $P^{\M{u}}$ projects vectors orthogonally
onto $T_{\M{u}} \S^2$, while $J^{\M{u}}$ is a $\pi/2$ rotation
(complex structure) on $T_{\M{u}} \S^2$.   
Denoting 
\EQ{
  a = a_1 + i a_2 \in \C, }
the Landau-Lifshitz equation~\eqref{LL} may be written
\EQ{  \label{LL2}
  \M{u}_t = P^{\M{u}}_a \De \M{u}, \quad\quad
  P^{\M{u}}_a := a_1 P^{\M{u}} + a_2 J^{\M{u}} }
The energy associated to~\eqref{LL} is simply the 
Dirichlet functional
\EQ{
  \E(\M{u}) = \frac 12 \int_{\R^2} |\nabla \M{u}|^2 dx }
and~\eqref{LL2} formally yields the energy identity
\EQ{
    \E(\M{u}(t)) + 2 a_1 \int_0^t \int_{\R^2} 
    | P^{\M{u}} \De \M{u} (s,x) |^2 dx ds  = \E(\M{u}(0)) }
implying, in particular, energy non-increase if $a_1 > 0$,
and energy conservation if $a_1 = 0$ (Schr\"odinger map).

To a finite-energy map $\M{u} : \R^2 \to \S^2$ is associated
the {\it degree}
\EQ{ \label{deg}
  \deg(\M{u}) := \frac{1}{4\pi} \int_{\R^2} \M{u}_{x_1} \cdot 
  J^{\M{u}} \M{u}_{x_2} dx. } 
If $\lim_{|x| \to \infty} \M{u}(x)$ exists (which will be the 
case below), we may identify $\M{u}$ with a map $\S^2 \to \S^2$, 
and if the map is smooth, $\deg(\M{u})$ is the usual
Brouwer degree (in particular, an integer).  
It follows immediately from expression~\eqref{deg} 
that the energy is bounded from below by the degree:
\EQ{  \label{enbound}
    \E(\M{u}) =  \frac 12 \int_{\R^2} |\M{u}_{x_1}
    - J^{\M{u}} \M{u}_{x_2} |^2 + 4\pi \deg(\M{u})
    \geq 4\pi \deg(\M{u}), }
and equality here is achieved exactly at {\it harmonic maps}
solving the first-order equations
\EQ{  \label{cr}
  \M{u}_{x_1} = J^{\M{u}} \M{u}_{x_2} } 
which, in stereographic coordinates
\EQ{  \label{stereo}
  \S^2 \ni \M{u} \;\; \longleftrightarrow \;\;
  \frac{u_1 + iu_2}{1-u_3} \in \C \cup \{ \I \} }
are the Cauchy-Riemann equations, and the solutions are 
rational functions. 
These harmonic maps are critical points 
of the energy $\E$ and, in particular, static solutions of the 
Landau-Lifshitz equation~\eqref{LL}.

In this paper we specialize to the class of 
$m$-{\it equivariant maps}, for some $m \in \Z^+$:
\EQ{  \label{equiv}
  \M{u}(t,x) = e^{m\th R} \M{v}(t,r), \quad\quad
  \M{v} : [0,T) \times [0,\infty) \to \S^2 }
with notations
\EQ{
  R := J^{\M{k}} = \M{k}\times, \quad \M{k} = (0,0,1), }
and polar coordinates
\EQ{ 
  x_1+ix_2=re^{i\th}. }
In terms of the radial profile map 
$\M{v} = (v_1, v_2, v_3)$, the energy is
\EQ{
  \E(\M{u}) = \pi \int_0^\infty \left(
  | \M{v}_r |^2 + \frac{m^2}{r^2}(v_1^2 + v_2^2)
  \right) r dr  .}
Finite energy implies $\M{v}$ is continuous in $r$ and
$\lim_{r \to 0} \M{v} = \pm \M{k}$, 
$\lim_{r \to \I} \M{v} = \pm \M{k}$ 
(see~\cite{GKT1} for details).
We force non-trivial topology by working in the class of maps
\EQ{
  \Si_m := \{ \M{u} = e^{m \th R} \M{v}(r) \; | \; 
  \E(\M{u}) < \infty, \; \M{v}(0)=-\M{k},\ \M{v}(\I)=\M{k} \}. }
It is easy to check that the degree of such maps is $m$:
\EQ{  \label{degm}
   \deg \restriction_{\Si_M} \; \equiv m.  }

The harmonic maps saturating inequality~\eqref{enbound} which 
also lie in $\Sigma_m$ are those corresponding to 
$\be z^m$ ($\be\in \C^\times=\C \backslash \{ 0 \}$) 
in stereographic coordinates~\eqref{stereo}.
In the representation $\S^2 \subset \R^3$, the harmonic map 
corresponding to $z^m$ is given by
\EQ{
 e^{m \theta R} \M{h}(r), \pq
 \M{h} = (h_1,0,h_3), \pq 
 h_1 = \frac{2}{r^m+r^{-m}}, \pq 
 h_3 = \frac{r^m-r^{-m}}{r^m+r^{-m}}.}
The full two-dimensional family of $m$-equivariant harmonic maps 
in $\Si_m$ is then generated by rotation and scaling, so for
$s > 0$ and $\al \in \R$, we denote   
\EQ{
 \mu = m\log s + i\al, \pq \hmu = e^{\al R} \M{h}^s, \pq
 \M{h}^s=\M{h}(r/s).}
The harmonic map
$e^{m \theta R} h[\mu]$ corresponds 
under stereographic projection to $e^{-\bar\mu} z^m$. 

We are concerned here with basic global properties of solutions
of the Landau-Lifshitz equations~\eqref{LL}, especially
the possible formation of singularities, and the 
long-time asymptotics. 

For finite-energy solutions of~\eqref{LL} in 2 space dimensions, finite-time singularity formation is only known 
to occur in the case of the $1$-equivariant harmonic map 
heat-flow ($a_2 = 0$) -- the first such result \cite{CDY} 
was for the problem on a disk with Dirichlet boundary 
conditions (this was extended to $\Sigma_1$ on $\R^2$ in 
\cite{GGT}). Examples of finite-time blow-up for different
target manifolds (not the physical case $\S^2$) are also
known (eg. \cite{Top2}).
    
For the Schr\"odinger case ($a_1 = 0$), \cite{CSU} showed that
small-energy solutions remain regular. 
In the present setting, the energy is not small -- indeed
by~\eqref{enbound} and~\eqref{degm}, 
\EQ{
  \E \restriction_{\Si_m} \; \geq 4 \pi m. }    
A self-similar blow-up solution, which however carries
infinite energy, is constructed in~\cite{GSZ}. 

In the recent works \cite{GKT2,GGT,GGKT}, it was shown that
when $m \geq 4$, solutions of~\eqref{LL} in $\Si_m$
with near minimal energy ($\E(\M{u}) \approx 4 \pi m$)
are globally regular, and converge asymptotically to 
a member $e^{m \th R} \hmu$ of the harmonic map family.
In particular, the harmonic maps are {\it asymptotically 
stable}. The analysis there fails to extend to $m \leq 3$, 
due to the slower spatial decay of $\frac{d}{d \mu} \M{h}[\mu]$
(a point which we hope to clarify below). With a new
approach, we can now handle the case $m=3$ as well: 

\begin{thm}  \label{thm1}
Let $m\ge 3$, $a = a_1 + i a_2 \in \C \backslash \{0\}$, 
and $a_1 \ge 0$. 
Then there exists $\de>0$ such that for any
$\M{u}(0,x) \in \Sigma_m$ with $\E(\M{u}(0))\le 4m\pi+\de^2$, 
we have a unique global solution $\M{u}\in C([0,\I);\Si_m)$
of~\eqref{LL}, satisfying 
$\na\M{u}\in L^2_{t,loc}([0,\I);L^\I_x)$. 
Moreover, for some $\mu \in \C$ we have 
\EQ{
  \|\M{u}(t)-e^{m\th R}\hmu\|_{L^\I_x} + a_1
  \E(\M{u}(t)-e^{m\th R}\hmu) \to 0 \pq \mbox{ as } \; t\to\I. }
\end{thm}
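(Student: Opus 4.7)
\emph{Plan.} I would decompose the solution near the harmonic map orbit as a modulated harmonic map plus a small transverse perturbation, derive coupled evolution equations for the modulation parameter and the perturbation, and close a global bootstrap using dispersive estimates adapted to the linearized operator. For $m\ge 4$ this program is carried out in~\cite{GKT2}. For $m=3$, two new ingredients are required: a normal-form reduction of the parameter ODE, and a 2D double-endpoint radial Strichartz estimate for the linearized Schr\"odinger operator, both of which are advertised in the abstract.

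\emph{Setup and linearization.} Using $\E(\M{u}(0))-4\pi m\le\de^2$, select $\mu(t)\in\C$ by suitable orthogonality conditions and write $\M{u}(t,x)=e^{m\th R}(\hmu+\M{w}(t,r))$ with $\M{w}$ transverse to $\{\pd_s\hmu,\pd_\al\hmu\}$; since these tangent vectors decay like $r^{-m}$ and for $m\ge 3$ lie (barely, in the borderline case $m=3$) in $L^2_{\mathrm{rad}}(\R^2)$, the decomposition is smooth near the orbit. Substituting into~\eqref{LL2} and projecting onto tangent/normal directions gives (i) a modulation ODE $\dot\mu=F(\mu,\M{w})$, and (ii) a perturbation equation of the schematic form $\M{w}_t+aH\M{w}=N(\mu,\M{w})+(\text{modulation error})$, where, after passing to a complex tangent-frame coordinate on $T_{\hmu}\S^2$, $H=-\De+V(r)$ is a scalar radial Schr\"odinger operator with potential behaving like $m^2/r^2$ at infinity plus short-range corrections from $\hmu$.

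\emph{Normal form and Strichartz.} The key difficulty for $m=3$ is that the leading linear-in-$\M{w}$ piece of $\dot\mu$ is dispersive rather than absolutely $t$-integrable, preventing direct convergence of $\mu(t)$. I would cure this by a normal-form change $\ti\mu=\mu-\Phi(\M{w})$, with $\Phi$ chosen so that, upon substituting the $\M{w}$-equation (equivalently, integrating by parts in $t$), the non-integrable linear contribution is cancelled and $\dot{\ti\mu}$ is reduced to terms at least quadratic in $\M{w}$. For $\M{w}$ itself, the central new estimate is the 2D radial double-endpoint Strichartz bound $\|e^{-itH}f\|_{L^2_tL^\I_x}\lec\|f\|_{L^2_x}$; this fails for the free Laplacian (the Keel--Tao endpoint is excluded in dimension two) but becomes available because the repulsive inverse-square contribution in $V$ suppresses the low-frequency/radial concentration that obstructs the endpoint, and this is precisely where the threshold $m\ge 3$ enters. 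Combined with the energy/dissipation identity when $a_1>0$ and with standard radial Strichartz in the Schr\"odinger case $a_1=0$, this controls a combined norm $X(T)$ of $\M{w}$ on $[0,T]$ by $\de$ plus nonlinearities. A standard continuity argument then gives $X(T)\lec\de$ globally and $\|\M{w}(t)\|_{L^\I_x}\to 0$; the $L^1_t$ bound on $\dot{\ti\mu}$ gives $\ti\mu(t)\to\ti\mu_\I$ and hence $\mu(t)\to\mu_\I\in\C$, which finishes the theorem.

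\emph{Main obstacle.} The two genuinely novel steps---constructing the normal-form $\Phi$ so that it exactly cancels the resonant contribution to $\dot\mu$, and proving the 2D radial double-endpoint Strichartz estimate for $H$ with quantitative dependence on the repulsivity of $V$---are the hard part. Everything else (local theory, modulation, bootstrap) is essentially a refinement of~\cite{GKT2,GGT,GGKT}, so the success of the argument hinges on how sharply $m\ge 3$ can be exploited in the endpoint Strichartz bound and on how cleanly the normal-form identity can be written down.
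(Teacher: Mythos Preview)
Your outline captures the broad architecture (decomposition, modulation ODE, normal form, endpoint Strichartz), but it contains a genuine gap precisely at the point that distinguishes $m=3$ from $m\ge 4$: the choice of orthogonality condition. You propose to make $\M{w}$ transverse to $\{\pd_s\hmu,\pd_\al\hmu\}$, i.e.\ orthogonal to $h_1^s$. This is the decomposition used in~\cite{GKT2}, and the paper explains explicitly (Section~1.1 and Section~\ref{splitting}) why it \emph{fails} for $m\le 3$. The issue is not whether $h_1^s\in L^2$ (it is, for all $m\ge 2$; your claim that it is ``barely in $L^2$'' for $m=3$ is incorrect), but whether the associated projection is bounded on the scaling-critical space used for the $L^2_t$ Strichartz bound. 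By the optimality clause of Lemma~\ref{bd on R}, boundedness of the right inverse $R_{h_1}^s$ in the relevant norm requires $h_1\in r^{-1}L^1_1$, which forces $m>3$. The paper's key new idea, which you are missing, is to replace $h_1$ by a \emph{compactly supported} function $\fy$ in the orthogonality condition $\ip{z}{\fy^s}=0$; the price is that the decomposition is no longer invariant under the linearized flow, so $\dot\mu$ acquires a new linear-in-$q$ term, and it is \emph{this} term (not a ``resonant'' one in the usual sense) that the normal-form step must absorb.

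You also misplace the threshold $m\ge 3$. The double-endpoint Strichartz estimate for the linearized operator $H^s$ (Lemma~\ref{Str H}/Theorem~\ref{thm:endpoint}) holds for all $m>1$; it is not where $m=3$ is borderline. The threshold enters instead in the normal-form analysis of Section~\ref{normal form}: the correction involves pairing $q$ against a function $\psi$ with $\psi(r)\sim r^{1-m}$ at infinity, so $\psi\in L^2$ requires $m>2$ (this is what separates $m\ge 3$ from $m=2$), and $\psi\in L^1$ requires $m>3$. For $m=3$ the remaining inner products $\ip{q}{\rs\psi}$ and $\ip{q}{(r\p_r+1)\rs\psi}$ are not controlled by $\|q\|_{L^2_tL^\I_2}$ alone and need the separate argument of Section~\ref{m=3} (Lemma~\ref{free L^2_t/r^2} plus a perturbative Strichartz estimate in $\R^6$). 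Finally, the paper does not linearize directly in $\M{w}$ but passes through the generalized Hasimoto variable $q=\V{e}\cdt\p_{\M{v}}\M{v}$ in a frame parallel along the \emph{solution} $\M{v}$ (not along $\hmu$); this is what produces the clean scalar equation~\eqref{qeq} on which the Strichartz machinery acts.
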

In short, every solution with energy close to the minimum converges to one of the harmonic maps uniformly in $x$ as $t\to\I$. 
Even for the higher degrees $m\ge 4$, this result is stronger than the 
previous ones \cite{GKT2,GGT,GGKT}, where the convergence was given 
only in time average.\footnote{The statements in the previous papers 
do not follow directly from Theorem~\ref{thm1}, but are implied 
by the proof in this paper.}
Note that in the dissipative case ($a_1 > 0$), solutions 
converge to a harmonic map also in the energy norm, while this is
impossible for the conservative Schr\"odinger flow ($a_1=0$).

The analysis for the case $m = 2$ seems trickier still, 
and we have results only in special case of the harmonic 
map heat-flow ($a_2 = 0$) with the further restriction that the 
image of the solution remain on a great circle: $v_2 \equiv 0$
(a condition preserved by the evolution only for the heat-flow). 
These results show, in particular, that the strong asymptotic 
stability result of Theorem~\ref{thm1} for $m \geq 3$ is no 
longer valid; instead, more exotic asymptotics are possible, 
including infinite-time concentration (blow-up) and 
``eternal oscillation'': 

\begin{thm}  \label{thm2}
Let $m=2$ and $a > 0$. 
Then there exists $\de>0$ such that for any
$\M{u}(0,x) = e^{2 \th R} \M{v}(0,r) \in \Sigma_2$ 
with $\E(\M{u}(0))\le 8\pi+\de^2$, and $v_2(0,r) \equiv 0$, 
we have a unique global solution $\M{u} \in C([0,\I);\Si_2)$
satisfying $\na\M{u}\in L^2_{t,loc}([0,\I);L^\I_x)$. 
Moreover, for some continuously differentiable
$s:[0,\I)\to(0,\I)$ we have 
\EQ{ \label{conv}
  \|\M{u}(t)-e^{m\th R}\M{h}(r/s(t))\|_{L^\I_x} +
  \E(\M{u}(t)-e^{m\th R}\M{h}(r/s(t))) \to 0 
  \;\; \mbox{ as } \; t \to \I. }
In addition, we have the following asymptotic formula for $s(t)$:
\EQ{  \label{s asy} 
  (1+o(1))\log(s(t)) = \frac{2}{\pi}\int_{1}^{\sqrt{at}} 
  \frac{v_1(0,r)}{r} dr + O_c(1),}
where as $t \to \I$, $o(1) \to 0$ and $O_c(1)$ converges to 
some finite value. In particular there are initial data 
yielding each of the following types of asymptotic behavior:
\begin{enumerate}
\item $s(t) \to \exists \; s_\I \in (0, \I)$. 
\item $s(t)\to 0$.
\item $s(t)\to \I$. 
\item $0=\liminf s(t)<\limsup s(t)<\I$.
\item $0<\liminf s(t)<\limsup s(t)=\I$.
\item $0=\liminf s(t)<\limsup s(t)=\I$.
\end{enumerate}
\end{thm}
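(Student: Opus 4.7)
I would begin by reducing to a scalar PDE: since $a_2=0$ and the constraint $v_2\equiv 0$ is preserved by the heat-flow, write $\M{v}=(\sin\phi,0,-\cos\phi)$ for a scalar angle $\phi(t,r)$ with boundary conditions $\phi(t,0)=0$, $\phi(t,\I)=\pi$. The $m=2$ equivariant form of \eqref{LL2} reduces to
\[
\phi_t = a\bigl(\phi_{rr} + r^{-1}\phi_r - 2 r^{-2}\sin(2\phi)\bigr),
\]
and the harmonic-map family collapses to the one-parameter family $Q_s(r):=2\arctan((r/s)^2)$, so the task becomes to track a modulation parameter $s(t)$. Global existence together with $\na \M{u}\in L^2_{t,loc}L^\I_x$ for near-minimal-energy data should follow from standard parabolic techniques in this symmetry class, exploiting the $\dot H^1$ coercivity supplied by \eqref{enbound}.

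I would then introduce modulation: $\phi(t,r)=Q_{s(t)}(r)+\psi(t,r)$, with $s(t)$ determined by orthogonality of $\psi$ to a suitable cutoff of the scaling direction $\Lambda Q_s:= r\p_r Q_s$, so that $s\in C^1$ by the implicit function theorem. The decisive feature of $m=2$ is the slow decay $\Lambda Q_s(r)\sim r^{-2}$: the $L^2(r\,dr)$ pairing of $\psi$ against $\Lambda Q_s$ truncated at the parabolic horizon $r\sim\sqrt{at}$ carries a logarithmic normalization, so the modulation equation for $\dot s/s$ is weighted by a log factor of order $\log(\sqrt{at}/s(t))$ -- a feature entirely absent for $m\ge 3$, and which is what allows $s(t)$ to wander or even fail to converge.

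The heart of the argument is the outer-region analysis. Since the linearized potential around $Q_s$ decays like $r^{-4}$, for $s(t)\ll r\ll\sqrt{at}$ the equation for $\psi$ is close to the free 2D radial heat equation, and so $\psi$ in this annulus is essentially given by the heat-kernel propagation of the initial far-field angle (equivalently, of $v_1(0,\cdot)$). Feeding this back into the modulation equation, changing variables $r\mapsto r/\sqrt{at}$, and integrating in $t$ while absorbing the logarithmic weight on the left produces the asymptotic formula \eqref{s asy} with a genuinely convergent $O_c(1)$ remainder. The stated convergences in $L^\I_x$ and energy then follow from smallness of $\psi$ in a natural scale-invariant norm. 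To realize each of the six asymptotic behaviors (1)-(6) for $s(t)$, one prescribes the tail of $v_1(0,r)$ so that $R\mapsto \int_1^R v_1(0,r)r^{-1}dr$ has the desired liminf and limsup as $R\to\I$: constant, $\pm\I$, or oscillatory with carefully tuned amplitudes and frequencies.

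The main obstacle is the uniform-in-time control of $\psi$ while $s(t)$ itself may drift to $0$, $\I$, or oscillate: the a priori estimates for $\psi$ must be carried out in a scale-invariant (or slowly weighted) norm that tolerates arbitrary drift of $s$, and the inner-region nonlinear self-interaction of $\psi$ -- where $Q_s$ is not small -- must be absorbed using only the logarithmic damping supplied by the modulation equation. Particularly delicate is the intermediate region $r\sim s(t)$, where neither the inner modulation argument nor the outer heat-kernel approximation is sharp, so a careful matching between the two regions is needed to produce the clean formula \eqref{s asy} with a true $O_c(1)$ (convergent, not merely bounded) correction.
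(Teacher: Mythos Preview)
Your scalar reduction to the angle equation and the modulation ansatz $\phi=Q_s+\psi$ are legitimate, but the mechanism you propose for the $s(t)$ dynamics --- a logarithmic weight on $\dot s/s$ coming from the near-divergent $L^2$ norm of $\Lambda Q_s$ truncated at the parabolic horizon --- is \emph{not} the one the paper uses, and your sketch leaves the hardest step unresolved.

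The paper takes a structurally different route. First, it works not with the difference $\psi$ but with the Hasimoto-type derivative quantity $q=\M w\cdot\V e$ of Section~\ref{frame}; the key advantage is that $q$ obeys the uniform energy identity $\partial_t\|q\|_{L^2}^2=-2a_1\|L_{\M v}^*q\|_{L^2}^2$, giving $\|q\|_{L^\I_tL^2_x}+\|q/r\|_{L^2_{t,x}}\lec\de$ \emph{regardless} of what $s(t)$ does. This dissolves the ``uniform-in-drift'' obstacle you correctly flag as central. Second, the orthogonality is against a \emph{fixed} compactly supported $\varphi^s$, not a horizon-truncated $\Lambda Q_s$, so there is no logarithmic weight on $\dot\mu$. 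Instead (Section~\ref{normal form}) the leading non-$L^1_t$ term in $\dot\mu$ is, after integration by parts first in $r$ and then in $t$, identified as a total time derivative $\partial_t\langle q\mid\psi^s/s\rangle$ with $\psi=R_\varphi^*(\varphi-ch_1)\sim -c\,r^{-1}$ for $m=2$. It is this normal-form boundary term --- not a log-weighted ODE --- that carries the possibly divergent part; Section~\ref{m=2} then replaces $q$ by the free evolution $e^{t\Delta_2^{(1)}}q(0)$ via pointwise heat-kernel bounds (Lemma~\ref{pt bd fund sol}), freezes $s$, and a Plancherel computation produces $\int^{\sqrt t}q(0,r)\,dr$, rewritten as the $v_1$ integral.

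Your outer-region heat-kernel picture captures the same physics, but the log-weighted modulation ODE you sketch would require controlling both the terms generated by differentiating the moving truncation scale and the pairing $\langle\psi_t,\chi_R\Lambda Q_s\rangle$; the latter is exactly where the non-integrable contribution hides, and you give no mechanism to extract it cleanly. The paper's normal-form device is precisely what turns that contribution into an explicit, computable boundary term, and is the missing ingredient in your plan.
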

Estimate~\eqref{conv} shows that these solutions 
do converge asymptotically to the {\it family} of harmonic maps.
However, the evolution along this family, described by the 
parameter $s(t)$, does not necessarily approach a particular map 
in $\Si_2$ (although it might -- case (1)). The solution may 
in fact converge pointwise (but not uniformly) to a constant map 
$\pm \M{k}$ (which has zero energy, zero degree, and lies outside
$\Si_2$) as in (2)-(3) (this is infinite-time blow-up or 
concentration), or it may asymptotically ``oscillate'' along  
the harmonic map family, as in (4)-(6). 

Note that the above classification (1)-(6) is stable against initial ``local" perturbation. 
Namely, if two initial data $v^1(0)$ and $v^2(0)$ satisfy 
\EQ{
 \int_1^\I \frac{|v_1^1(0,r)-v_1^2(0,r)|}{r} dr < \I,}
the corresponding solutions have the same asymptotic type among (1)-(6). 
More precisely, the difference of their scaling parameters converges in $(0,\I)$.   
The point is that the energy just barely fails to control the above integral. 

In particular, the oscillatory behavior in (4)-(6) is driven solely by the distribution around spatial infinity. 
In fact, if we replace the domain $\R^2$ by the disk $D=\{x\in\R^2\mid|x|< 1\}$ with the same symmetry restriction with $m=2$ and the same boundary conditions $v(t,0)=-\M{k}$ and $v(t,1)=\M{k}$, then it is known \cite{AH,GS} that all the solutions behave like (2), namely they concentrate at $x=0$ as $t\to\I$. 
Also, if we replace the domain $\R^2$ by $\S^2$, then we can rather easily show in the dissipative case $a_1>0$ that the solution converges to one harmonic map for all $m\in\N$, by the argument in this paper, or even those in the previous papers. 
We state the result on $\S^2$ in Appendix A with a sketch of the proof. 

As for other target spaces, we should mention an example in \cite[Section 5]{Top}, which is ``eternal winding" by heat flow around a compact 1-parameter family of harmonic maps from $\S^2$ to $\S^2\times\R^2$ with an artificial warped metric. 
Our result has the following comparative advantages:  
\begin{enumerate}
\item The geometric setting is very simple and physically natural. 
\item The asymptotic formula is explicit in terms of the initial data. 
\end{enumerate}
In addition, our analysis works in the same way in the dissipative ($a_1>0$) and the dispersive ($a_1=0$) cases. 
We need $a_2=0$ in Theorem \ref{thm2} only because the angular parameter $\al(t)$ gets beyond our control (hence we remove it by the constraint), but the rest of our arguments could work in the general case.\footnote{We will use the parameter convergence in the proof of Theorem \ref{thm1} in the dispersive case $a_1=0$ to fix our linearized operator. 
However it is possible to treat the linearized operator even with non-convergent parameter and $a_1=0$, if we assume one more regularity on the initial data. 
We do not pursue it here since the wild behavior of $\al(t)$ prevents us from using it.}


\subsection{The main difficulty and the main idea}
The standard approach for asymptotic stability is to decompose the solution into a leading part with finite dimensional parameters varying in time, and the rest decaying in time either by dissipation or by dispersion. 
In our context, we want to decompose the solution in the form 
\EQ{
 \M{v}(t) = \M{h}[\mu(t)] + \res{v}(t)}
such that the remainder $\res{v}(t)$ decays, and the parameter 
$\mu(t) \in \C$ converges as $t\to\I$ (at least for Theorem \ref{thm1}). 
In favorable cases (the higher $m$, in our context), we can choose 
$\mu(t)$ such that all secular modes for $\res{v}(t)$ are absorbed 
into the time evolution of the main part $\M{h}[\mu(t)]$. 
This means that the kernel of the linearized operator for $\res{v}(t)$ is spanned by the parameter derivatives of $\M{h}[\mu]$, and hence we can put that component of $\p_t\M{v}(t)$ into $\p_t\M{h}[\mu(t)]$. 
This is good both for $\res{v}(t)$ and $\mu(t)$, because  
\begin{enumerate}
\item $\res{v}(t)$ will be free from secular modes, and so we can expect it to decay by dissipation or dispersion, at least at the linearized level.  
\item The decomposition is preserved by the linearized equation. Hence $\dot\mu(t)$ is affected by $\res{v}$ only superlinearly, i.e.~at most in quadratic terms. 
\end{enumerate}
In particular, if we can get $L^2$ decay of $\res{v}$ in time, then $\dot\mu(t)$ becomes integrable in time, and so converges as $t\to\I$. This is indeed the case for $m>3$. 

However, the above naive argument does not take into account the space-time behavior of each component. 
The problem comes from the fact that the decomposition and the decay estimate must be implemented in different function spaces, and they may be incompatible if the eigenfunctions decay too slowly at the spatial infinity. 

In fact, the parameter derivative of $\hmu$ is given by 
\EQ{
 d\hmu = h_1^s e^{\al R}[(h_3^s,0,-h_1^s)d\mu_1+(0,1,0)d\mu_2]}
and hence the eigenfunctions are $O(r^{-m})$ for $r\to\I$, i.e.~slower for lower $m$. 
On the other hand, the spatial decay property in the function space for the time decay estimate is essentially determined by the invariance of our problem under the scaling 
\EQ{
 \M{v}(t,x) \mapsto \M{v}(\la^2 t, \la x),}
which maps solutions into solutions, preserving the energy. 
If we want $L^2$ decay in time (so that we can integrate quadratic terms in $\dot\mu$), then a function space with the right  scaling is given by 
\EQ{
 \res{v}/r \in L^2_t L^\I_x.}
To preserve such norms in $x$ under the orthogonal projection, 
the eigenfunction must be in the dual space, for which $m>3$ is necessary. 
Indeed, this is the essential reason for the restriction $m\ge 4$ in the previous works \cite{GKT1,GKT2,GGT}. 
We emphasize that the above difficulty is common for the dissipative and dispersive cases, since they share the same scaling property. 
That is, the dissipation does not help with this issue, even though it 
gives us more flexibility in the form of decay estimates. 

The main novelty of the present approach is the non-orthogonal 
decomposition 
\EQ{
  L^2_x = (h_1^s) \oplus (\fy^s)^\perp,}
where $\fy^s(r)$ is smooth and supported away from $r=0$ and from $r=\I$, 
so that the (non-orthogonal) projection may preserve the decay estimates. 
This is good for the remainder $\res{v}$, but not for the parameter $\mu$
---the decomposition is no longer preserved by the linearized evolution, since they have no particular relation. 
This implies that we get a new error term in $\dot\mu(t)$ which is 
linear in $\res{v}(t)$ (see Section~\ref{parameter}). 
This contribution is handled by including it in 
a sort of ``normal form'' for the dynamics of the parameters 
$\mu(t)$, explained in Section~\ref{normal form}.
In particular, it is this new term which drives the non-trivial 
dynamics for the $m=2$ heat-flow given in Theorem~\ref{thm2}. 

For the purely dispersive (Schr\"odinger map) case, 
one tool we use should be of some independent interest: the 2D radial 
``double-endpoint Strichartz estimate'' for Schr\"odinger operators 
with sufficiently ``repulsive'' potentials (in the absence of a 
potential, the estimate is false). The proof is given in 
Section~\ref{end}.

\subsection{Organization of the paper}
In Section~\ref{frame}, we use the ``generalized 
Hasimoto transform'' to derive the main equation used
to obtain time-decay estimates of the remainder term.
Section~\ref{splitting} gives the details of the
solution decomposition described above, 
and addresses the inversion of the Hasimoto transform.
The estimates for going back and forth between the 
different coordinate systems (the ``Hasimoto'' one of 
Section~\ref{frame} and the decomposition of 
Section~\ref{splitting}) are given in Section~\ref{coord}.  
Section~\ref{decay} is devoted to establishing the time-decay 
(dispersive if $a_1=0$, diffusive if $a_1 > 0$) of the 
remainder term, using energy-, Strichartz-, and scattering-type 
estimates. The dynamics of the parameters $\mu(t)$ are
derived and estimated in Section~\ref{parameter}.
The leading term in the equation for $\dot \mu$ is not
integrable in time, and so Section~\ref{normal form} gives
an integration by parts in time to identify (and estimate)
a kind of ``normal form'' correction to $\mu(t)$, whose time
derivative is integrable. At this stage, the proof
of Theorem~\ref{thm1} for $m > 3$ is complete. A more subtle 
estimate of an error term for $m=3$ is done in 
Section~\ref{m=3}, completing the proof in that case.
Finally, in Section~\ref{m=2}, the normal form correction
is analyzed in the case $m=2$, $a_2 =0$, $v_2=0$, in order to 
prove Theorem~\ref{thm2}. Proofs of certain linear estimates
(including the double-endpoint Strichartz) are relegated    
to Section~\ref{pf lin ests}. Appendix A states the
analogous theorems for domain $\S^2$ and sketches the proofs.


\subsection{Some further notation}

We distinguish inner products in $\R^3$ and $\C$ by 
\EQ{
  \M{a} \vdt \M{b} = \sum_{k=1}^3 a_k b_k, \pq 
  a \cdt b = \Re a \Re b + \Im a \Im b.}
Both will be used for $\C^3$ vectors too. 
The $L^2_x$ inner-product is denoted
\EQ{
 \ip{f}{g} = \int_{\R^2} f(x)\bar{g(x)} dx,}
while $(f,g)$ just denotes a pair of functions. 
For any radial function $f(r)$ and any parameter $s>0$, we denote rescaled functions by 
\EQ{
  f^s(r) := f(r/s), \pq \rs{f}(r) := f(r/s)s^{-2}. }
We denote the Fourier transform on $\R^2$ by $\F$, and, for radial 
functions, the Fourier-Bessel transform of order $m$ by $\F_m$: 
\EQ{
 (\F f)(\x)= \frac{1}{2\pi} \int_{\R^2}f(x)e^{-ix \cdot \x}dx, \pq (\F_m)f(\r) = \int_0^\I J_m(r\r)f(r)rdr,}
where $J_m$ is the Bessel function of order $m$. For $m\in\Z$ we have 
\EQ{
 J_m(r) = \frac{1}{2\pi}\int_{-\pi}^{\pi} e^{im\th-ir\sin\th} d\th, \pq \F[f(r)e^{im\th}] = i^m(\F_mf)e^{im\th}.}
We denote the Laplacian $\De_x$ on the subspace spanned by $d$-dimensional spherical harmonics of order $m$ by 
\EQ{
 \De_d^{(m)} := \p_r^2 + (d-1)r^{-1}\p_r - m(m+d-2)r^{-2}.}
Finally, the space $L^p_q$ is the dyadic version of $L^p(rdr)$ 
defined by the norm 
\EQ{ \label{Lpq}
  \|f\|_{L^p_q} = \bigl\|\|f(r)\{2^j<r<2^{j+1}\}\|_{L^p(rdr)}\bigr\|_{\ell^q_j(\Z)}.}


\section{Generalized Hasimoto transform}
\label{frame}
First we recall from the previous papers \cite{GKT1,GGT,GGKT} the equation for the remainder part, which is written in terms of a derivative vanishing exactly on the harmonic maps, and so independent of the decomposition. 
The equation was originally derived in \cite{CSU} in the case of small 
energy solutions (hence with no harmonic map component), and called 
there the {\it generalized Hasimoto transform}.  

Under the $m$-equivariance assumption~\eqref{equiv}, the Landau-Lifshitz equation \eqref{LL2} is equivalent to the following reduced equation for $\M{v}(r,t)$:
\EQ{  \label{veq}
  \M{v}_t = P^{\M{v}}_a \left[ \p_r^2 + \frac{\p_r}{r} + 
  \frac{m^2}{r^2}R^2 \right] \M{v}. }
Define the operator $\p_{\M{v}}$ on vector-valued functions by 
\EQ{
  \p_{\M{v}} := \p_r - \frac{m}{r}J^{\M{v}} R. }
Since for any vector $\M{b}$, 
$J^{\M{v}} R \M{b} = \M{k}(\M{v}\vdt\M{b})-(\M{k}\vdt\M{v}) \M{b}$, 
we have $J^{\M{v}} R = -v_3$ on the tangent space 
$T_{\M{v}} \S^2 = \M{v}^\perp$. 
For future use, we denote the corresponding operator 
on scalar functions by 
\EQ{
  L_{\M{v}} := \p_r + \frac{m v_3}{r}. }
Then equation~\eqref{veq} can be factored as 
\EQ{  \label{vfac}
  \M{v}_t = - P^{\M{v}}_a D_{\M{v}}^* \p_{\M{v}} \M{v},}
where 
\EQ{
  D := P^{\M{v}} \p P^{\M{v}} } 
will always denote a covariant derivative (which acts on 
$T_{\M{v}} \S^2$-valued functions), and $*$ denotes the adjoint 
in $L^2(\R^2)$. 
Denote the right-most factor in~\eqref{vfac} by 
\EQ{ 
  \M{w} := \p_{\M{v}} \M{v} = \M{v}_r - \frac{m}{r} P^{\M{v}} \M{k}. } 
Then~\eqref{vfac} becomes 
$\M{v}_t = - P^{\M{v}}_a D_{\M{v}}^* \M{w}$, and 
applying $D_{\M{v}}$ to both sides yields
\EQ{ \label{weq}
  \pq D_t \M{w} = - P^{\M{v}}_a D_{\M{v}} D_{\M{v}}^* \M{w}. }
Now we rewrite the equation for $\M{w}$ by choosing an appropriate 
orthonormal frame field on $T_{\M{v}}\S^2$, realized in $\C^3$.
Let $\V{e}=\V{e}(t,r)$ satisfy
\EQ{
  \Re\V{e} \in \M{v}^\perp, \pq |\Re\V{e}|=1, 
  \pq \Im\V{e} = J^{\M{v}} \Re\V{e}.}
Let $S,T$ be real scalar, and let $q,\nu$ be complex scalar, defined by 
\EQ{
 \pt \M{w} = q \cdt \V{e}, \pq P^{\M{v}} \M{k} = \nu\cdt\V{e}, 
 \pq D_t \V{e} = -i S \V{e}, \pq D_r \V{e} = -i T \V{e}.}
Then we have the general curvature relation
\EQ{
 [D_r, D_t]\V{e} = i (T_t-S_r) \V{e} = -i\det\mat{\M{v} & \M{v}_r & \M{v}_t} \V{e}.}
Using the equation~\eqref{vfac} for $\M{v}$, we get
\EQ{ \label{curve}
  S_r - T_t \pt= (\M{w}+\frac{m}{r} P^{\M{v}}\M{k})
  \vdt P^{\M{v}}_{ia} D_{\M{v}}^* \M{w}. }

Now we fix $\V{e}$ by imposing 
\EQ{
  D_r \V{e} = 0, \pq \V{e}(r=\I) = (1,i,0).}
(The unique existence of such $\V{e}$ will be guaranteed by Lemma \ref{coord lem}.)  
Then~\eqref{curve} yields 
\EQ{
  S_r = (q+\frac{m}{r}\nu)\cdt(iaL_{\M{v}}^*q).}
A key observation is that in the Schr\"odinger (non-dissipative) 
case $a=i$, we can pull out the derivative on $q$:
$S_r = (\p_r+\frac{2}{r})(\frac{1}{2}|q|^2 + 
\frac{m}{r} \nu \cdt q)$, and so
\EQ{ \label{SchroS}
  S = Q - \int_r^\I 2Q\frac{dr}{r}, \pq \quad
  Q:=\frac{1}{2}|q|^2 + \frac{m}{r}\nu\cdt q = 
  \frac{1}{2}|\M{w}|^2 + \frac{mw_3}{r}
  \quad\quad (a=i).}
The evolution equation~\eqref{weq} for $w$ yields our
equation for $q$: 
\EQ{ \label{qeq}
  \pt (\p_t - i S) q = -a L_{\M{v}} L_{\M{v}}^* q, 
  \pq \quad S = -\int_r^\I (q+\frac{m}{r}\nu)\cdt(iaL_{\M{v}}^*q)dr.}
This is the basic equation used to establish diffusive ($a_1 > 0$) 
or dispersive ($a_1 = 0$) decay estimates. 
The operator acting on $q$ can be expanded as 
\EQ{
  L_{\M{v}} L_{\M{v}}^* = \p_r^* \p_r + \frac{(m-1)^2}{r^2} + \frac{2m(1-v_3)}{r^2} + \frac{m}{r} w_3.} 


\section{Decomposition and orthogonality}
\label{splitting}
Next we introduce coordinates for the decomposition of the original map
\EQ{
 \M{v} = \hmu + \res{v},} 
or more precisely for the remainder $\res{v}$, and a localized orthogonality condition which determines the decomposition. 
The choice of coordinates is the same as in the previous works \cite{GKT2,GGT,GGKT}, while the decomposition itself is different. 

For each harmonic map profile $\M{h}[\mu]$, $\mu = m \log s + i \al$,
we introduce an orthonormal frame field 
\EQ{
  \V{f} = \V{f}[\mu] := e^{\al R}(-\M{h}^s\times\M{j} + i\M{j}). }
on the tangent space $T_{\hmu} \S^2$, such that the parameter 
derivative of $\hmu$ is given by 
\EQ{
  d\hmu = h_1^s d\mu \cdt\V{f}.} 
We express the difference from the harmonic map in this frame by 
\EQ{ \label{def z}
   z := \res{v}\vdt\V{f}. }
In other words $P^{\hmu}\M{v} = z\cdt \V{f}$, or 
$\res{v}=z\cdt \V{f} +\ga \hmu$, where we denote 
\EQ{ \label{def ga}
  \ga:=\sqrt{1-|z|^2}-1 = -O(|z|^2).}
As explained in the introduction, the orthogonality condition in the previous works
\EQ{
 \ip{z}{h_1^s} = 0}
would not work for $m\le 3$ due to the slow decay of $h_1^s$ for $r\to\I$. 
Hence instead we determine the parameter $\mu$ by imposing 
{\it localized orthogonality} 
\EQ{
  \ip{z}{\fy^s} = 0, \pq \quad \fy^s=\fy(r/s), }
with some smooth localized function $\fy(r)\in C_0^\I((0,\I);\R)$, 
satisfying $\ip{h_1 }{ \fy}=1$. 
The fact that $e^{m \th R} \M{h}[\mu]$ solves~\eqref{cr} means that 
\EQ{
  \p_{\M{h}} \M{h} = 0, }
and so we have
\EQ{ \label{def w}
  \M{w} = \p_{\M{v}} \M{v} = \res{v}_r + \frac{m}{r}(h_3^s\res{v}+\res{v}_3\M{v}) 
  = L^s\res{v}+\frac{m}{r}\res{v}_3 \M{v}. }
Hence   
\EQ{ \label{eq Lz}
 L^sz \pt= L^s\res{v}\vdt\V{f} + \res{v}\vdt \V{f}_r
 \pn= \M{w}\vdt\V{f} - \frac{m}{r}\res{v}_3 z + \frac{m}{r}h_1^s\ga.}
In order to estimate $z$ by $\M{w}$ (or equivalently $q$), we 
introduce a right inverse of the operator 
$L^s = \pd_r + \frac{m}{r} h_3^s$, defined by
\EQ{ 
\label{def R^s}
  \pt R_\fy^s g := h_1^s(r)\int_0^\I \int_{r'}^r 
  h_1^s(r'')^{-1}g(r'') dr'' \rs{\bar\fy}(r')h_1^s(r')r' dr'}
Then we have
\EQ{ \label{LR}
  L^s R_\fy^s g= g, \pq \quad R_\fy^s L^s g = g - h_1^s\ip{g}{\rs\fy}, }
hence $R_\fy^s=(L^s)^{-1}$ on $(\fy^s)^\perp$. Moreover we have 
the following uniform bounds
\begin{lem} \label{bd on R}
For all $p \in [1,\I]$ and $|\th|<m$, we have 
\EQ{
 \pt \|R_\fy^s g\|_{r^{\th}L^\I_p} \lec \|\fy\|_{r^{-\th}L^1_{p'}} \|g\|_{r^{\th+1}L^1_p},}
where the $L^p_q$ norm is defined in~\eqref{Lpq}. 
Moreover, the condition on $\fy$ is optimal in the following sense: if $\fy\ge 0$, then 
$\fy\in r^{-1}L^1_{p'}$ is necessary for $R_\fy^s$ to be bounded $r^{\th-1}L^\I_p\to\D'(0,\I)$. 
\end{lem}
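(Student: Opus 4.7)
The plan is to reduce to $s=1$ by scaling, rewrite the kernel via Fubini, and then perform a dyadic summation with a case analysis on the signs of the dyadic indices.

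First, a direct change of variables in the definition of $R_\fy^s$ shows that the operator and all three weighted norms scale consistently, so we may assume $s=1$. Switching the order of integration in the definition of $R_\fy$ will give
\begin{equation*}
 R_\fy g(r) = h_1(r) \int_0^r h_1^{-1}(r'') g(r'') \Phi(r'') dr'' - h_1(r) \int_r^\I h_1^{-1}(r'') g(r'') \Psi(r'') dr'',
\end{equation*}
with $\Phi(r'') = \int_0^{r''} \fy h_1 r' dr'$ and $\Psi(r'') = \int_{r''}^\I \fy h_1 r' dr'$. A second application of Fubini yields the bound
\begin{equation*}
 |R_\fy g(r)| \lesssim h_1(r) \int_0^\I |\fy(r')| h_1(r') r' \left|\int_{\min(r,r')}^{\max(r,r')} h_1^{-1} |g| dr''\right| dr',
\end{equation*}
which is the form I would use for the dyadic analysis.

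Next I would decompose $\fy = \sum_k \fy_k$ and $g = \sum_l g_l$ with supports in dyadic annuli $\{r \sim 2^k\}$ and $\{r \sim 2^l\}$. For $r \sim 2^j$, the piece $R_{\fy_k} g_l(r)$ vanishes unless $l$ lies strictly between $j$ and $k$, and using the symmetric asymptotic $h_1(2^n) \sim 2^{-|n|m}$ one gets
\begin{equation*}
 \sup_{r \sim 2^j} r^{-\th} |R_{\fy_k} g_l(r)| \lesssim 2^{-m(|j|+|k|-|l|) - \th(j+k-l)} \al_k \be_l,
\end{equation*}
where $\al_k, \be_l \ge 0$ are the dyadic renormalizations making the source norms equal to $\|\al\|_{\ell^{p'}}$ and $\|\be\|_{\ell^p}$. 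A case analysis on the signs of $j,k,l$ shows that $|j+k-l| \le |j|+|k|-|l|$, so the exponent always rewrites as $-(m+\th)X - (m-\th)Y$ with $X, Y \ge 0$ nonnegative linear combinations of the indices. The hypothesis $|\th|<m$ is then exactly what guarantees geometric decay in every direction, and a standard Young-type iteration in the discrete parameters finishes the main estimate.

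For the optimality, I would fix $\psi \in C_c^\I(0,\I)$ and evaluate $\ip{R_\fy g}{\psi}$ on $g$ concentrated on a single dyadic annulus far from $\supp \psi$: since $\fy \ge 0$ blocks cancellation, taking the supremum over unit-norm $g \in r^{\th-1} L^\I_p$ reproduces the $r^{-1} L^1_{p'}$-norm of $\fy$ annulus by annulus, forcing that norm to be finite if $R_\fy g$ is to even pair against $\psi$. The main obstacle is the dyadic case analysis: for each subcase of signs of $j,k,l$, I have to verify the absolute-value inequality $|j+k-l| \le |j|+|k|-|l|$ and track which of $(m+\th)$ or $(m-\th)$ controls each geometric direction. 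This bookkeeping is what pinpoints the sharp range $|\th|<m$, and it reflects the polynomial asymptotics $h_1 \sim r^{\pm m}$ of the underlying harmonic map.
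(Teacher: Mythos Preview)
Your approach to the main estimate is correct but organized differently from the paper's. You attack all $p\in[1,\infty]$ at once via the dyadic kernel bound and the clean identity
\[
 -m(|j|+|k|-|l|)-\th(j+k-l)=-(m+\th)(j_++k_+-l_+)-(m-\th)(j_-+k_--l_-),
\]
both brackets being nonnegative under the betweenness constraint, followed by case-by-case Young-type summation. The paper instead proves only the two endpoints $p=\infty$ (directly for $R_\fy$) and $p=1$ (via the dual estimate for $R_\fy^*$), where the dyadic sums collapse more simply, and then invokes bilinear complex interpolation to cover intermediate $p$. Your route is more elementary in that it avoids interpolation machinery, at the cost of the full case bookkeeping you flag as ``the main obstacle''; the paper's is shorter once one accepts the interpolation step. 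One small caution: your phrase ``standard Young-type iteration'' hides nontrivial work, since the kernel is not translation-invariant (it involves $|j|,|k|,|l|$, not just differences). In each sign region one must first sum out the inner variable geometrically and then recognize a genuine convolution in the remaining two; this does go through, but it is not a one-liner.

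There is a gap in your optimality argument. Testing with $g$ supported on a \emph{single} dyadic annulus yields, for each $l$, control on the individual dyadic piece $\fy_l$ --- which only shows $(\fy_l)\in\ell^\infty$, not $\ell^{p'}$. To force the $\ell^{p'}$ condition you need $\ell^p$--$\ell^{p'}$ duality, and for that $g$ must range over functions supported on \emph{many} annuli simultaneously. The paper does exactly this: it takes $g\ge 0$ piecewise dyadic-constant on $\{r>2^b\}$ with arbitrary nonnegative coefficients $(g_l)$, computes the lower bound $R_\fy g(r)\gtrsim h_1(r)\sum_j g_{j-1}\fy_j$ for $r<2^b$ (here $\fy\ge 0$ prevents cancellation, as you note), and pairs against $\psi\ge 0$ with $\ip{h_1}{\psi}=1$; boundedness into $\D'$ then forces $\sum_j g_{j-1}\fy_j<\infty$ for every nonnegative $(g_l)\in\ell^p$, hence $(\fy_j)\in\ell^{p'}$. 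A symmetric choice with $\supp g\subset(0,2^b]$ handles the remaining half.
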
 
We give a proof in Section~\ref{pf lin ests}. 
Note that the above bounds are scaling invariant: denoting 
$D_s f := f(r/s)$, we have
\EQ{
  \pt R_\fy^s = s D_s R_\fy^1 D_s^{-1},
  \pq L^s = s^{-1}D_s L^1 D_s^{-1}.}
We can combine the estimates of the Lemma with the embedding 
\EQ{
 r^{\th_1} L^{p_1}_{q_1} \subset r^{\th_2} L^{p_2}_{q_2} \iff \frac{2}{p_1}-\th_1 = \frac{2}{p_2}-\th_2,\ p_1\ge p_2,\ q_1\le q_2.} 
 
The above lemma is used as follows. 
First note that the orthogonality $\ip{\fy^s}{z}=0$ implies that $z=R_\fy^s L^s z$ because of~\eqref{LR}. 
For the energy norm, we choose $\th=0$ and $p=2$ in Lemma~\ref{bd on R}. Then   
\EQ{ \label{en bd 1 z}
  \pt\|z/r\|_{L^2_x} \lec \| z \|_{L^\I_2} 
  \lec \|\fy\|_{L^1_2} \|L^sz\|_{r L^1_2} \lec \|L^sz\|_{L^2_x}.}
Since $|L^s-\p_r|\lec 1/r$, we further obtain
\EQ{ \label{R bd 2 X}
  R_\fy^s : r^\th L^2 \to r^\th X, \pq (|\th|<m),}
where the space $X$ is defined by the norm 
\EQ{
  \|z\|_X := \|z/r\|_{L^2_x} + \|z_r\|_{L^2_x}.} 
The Sobolev embedding $X\subset L^\I$ is trivial by Schwarz: 
\EQ{ \label{Sob X}
 \|z\|_{L^\I_x}^2 \le \|z/r\|_{L^2_x} \|z_r\|_{L^2_x}.} 
Hence we get by using \eqref{eq Lz}, 
\EQ{ \label{en bd z}
 \|z\|_X \lec \|L^sz\|_{L^2} \lec \|q\|_{L^2} + \|z\|_{L^\I}\|z\|_X.}

For $L^2_t$ estimates of $z$, we use Lemma~\ref{bd on R} with $\th=1$ and $p=\I$. 
Then we have 
\EQ{ \label{str bd z}
  \pt\|z/r\|_{L^\I_p} \lec \|\fy\|_{r^{-1}L^1_1} 
  \|L^sz\|_{r^2 L^1_p} \lec \|L^sz\|_{L^\I_p},}
for any $p\in[1,\I]$, and so by using \eqref{eq Lz},
\EQ{ \label{Str bd z}
 \|z/r\|_{L^2_t L^\I_p} \lec \|q\|_{L^2_t L^\I_p} + \|z\|_{L^\I_{t,x}}\|z\|_{L^2_t L^\I_p}.}

If we were to use $h_1$ instead of $\fy$, then we 
would need $m>3$ for the Strichartz-type bound \eqref{Str bd z}, and 
$m>2$ for the energy bound \eqref{en bd 1 z}, by the last statement 
of the lemma.


\section{Coordinate change}
\label{coord}
Before beginning the estimates for the evolution, we establish in this 
section the bi-Lipschitz correspondence between the different coordinate systems: $\M{v}$ and $(\mu,q)$, including unique existence of the decomposition. 
It is valid for any map in our class $\Si_m$ with energy close to the ground states.

For that purpose, we need to translate between the different frames $\V{e}$ 
and $\V{f}$.  At each point $(t,r)$, we define 
$\Sc{M}=\V{f}\otimes\V{e}\in GL_\R(\C)$, a real-linear map 
$\C\to\C$, by 
\EQ{
  \Sc{M} z := \V{f}\vdt(\V{e}\cdt z) . } 
Its transpose $\Tr{\Sc{M}}=\V{e}\otimes\V{f}$, defined by 
$\Tr{\Sc{M}}z=\V{e}\vdt(\V{f}\cdt z)$, is the adjoint in the sense 
that $(\Sc{M}z)\cdt w = z \cdt(\Tr{\Sc{M}}w)$. 
For any $\V{b},\V{c},\V{d}\in\C^3$ we have 
\EQ{
 \V{b} \vdt (\V{c} \cdt \V{d}) = (\Re\V{b}\vdt\V{c})\cdt\V{d} + (\Im\V{b}\vdt\V{c})\cdt\V{d}.}
Since $\V{f}(\I)=e^{-i\al}\V{e}(\I)$, and $\V{f}\perp\hmu$, we have  
\EQ{ \label{eq M} 
  \Sc{M}(\I)=e^{-i\al}, \pq 
  \Sc{M}_r = \V{f}\otimes\V{e}_r + \V{f}_r\otimes\V{e} \pn = 
  - \V{f}\otimes \res{v} (\V{e}\vdt\M{v}_r)  
  - \frac{m}{r}h_1^s \res{v} \otimes \V{e}. } 
Then $\V{e}$ can be recovered from $\Sc{M}$ by
\EQ{ \label{e from M}
  \V{e} = P^{\hmu}\V{e} + (\hmu\vdt\V{e})\hmu 
  = \Tr{\Sc{M}}\V{f} - (1+\ga)^{-1}(\Tr{\Sc{M}}z)\hmu,}
provided that $|\ga|<1$. 
We further introduce some spaces with (pseudo-)norms.  
\EQ{
  \pt \E_m(\M{v})= \E(e^{m\th R}\M{v}(r)),  \pq 
  |\mu|_C = \min(|\Re\mu|,1)+dist(\Im\mu,2\pi\Z), 
  \pr \|z\|_X = \|z/r\|_{L^2_x}+\|z_r\|_{L^2_x}, \pq 
  \|\Sc{M}\|_Y = \|\Sc{M}_r\|_{L^1(dr)} + \|\Sc{M}\|_{L^\I_r} \pr 
  \Si_m(\de) = \{\M{v}(r):[0,\I]\to \S^2 \mid \M{v}(0)=-\M{k},\ 
  \M{v}(\I)=\M{k},\ \E_m(\M{v})\le 4m\pi + \de^2\},
  \pr L^2(\de)=\{q(r):[0,\I)\to\C\mid \|q\|_{L^2_x}\le \sqrt{2}\de\}, \pq 
  C = \C/2\pi i\Z.}
The metric on $C$ is defined such that 
\EQ{
  \|\M{h}[\mu^1] - \M{h}[\mu^2]\|_X \sim 
  \|\M{h}[\mu^1]-\M{h}[\mu^2]\|_{L^\I} \sim |\mu^1-\mu^2|_C.}
\begin{lem} \label{coord lem}
Let $m\in\N$ and $\fy\in C_0^1(0,\I)$ satisfy $\ip{\fy}{h_1}=1$. 
Then there exists $\de>0$ such that the system of equations  
\EQ{ \label{coord eq}
  \pt \M{v} = \hmu + \res{v}, \pq z = \M{v} \vdt\V{f}[\mu], \pq 
 \ip{z }{ \fy^s}=0, \pq \ga=\sqrt{1-|z|^2}-1,
 \pr q = \V{e}\cdt(\M{v}_r - \frac{m}{r}P^{\M{v}}\M{k}), 
 \pq D_r \V{e}=0, \pq \V{e}(\I)=(1,i,0),}
defines a bijection from $\M{v}\in \Si_m(\de)$ to $(\mu,q)\in C\times L^2(\de)$, which is unique under the condition $\|z\|_{L^\I_x}\lec\de$. $\res{v}$, $z$ and $\V{e}$ are also uniquely determined. 
Moreover, if $(\M{v}^j,\dots,\V{e}^j)$ with $j=1,2$ are such tuples given in this way, then we have 
\EQ{
 \pt \|\res{v}^1-\res{v}^2\|_X  
   + \|z^1-z^2\|_X + \|\V{e}^1-\V{e}^2\|_{L^\I} +\|\Sc{M}^1-\Sc{M}^2\|_Y
 \pr\lec \|\M{v}^1-\M{v}^2\|_{X} \pn\sim |\mu^1-\mu^2|_C+\|q^1-q^2\|_{L^2},}
where $\Sc{M}^j:=\V{f}^j\otimes\V{e}^j$.
\end{lem}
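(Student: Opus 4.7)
The plan is to verify the forward map $\M{v}\mapsto(\mu,q)$ and the backward map $(\mu,q)\mapsto\M{v}$ separately, then deduce the bi-Lipschitz estimates by comparing the two fixed-point constructions. Lemma~\ref{bd on R} is the central linear tool; the normalization $\ip{\fy}{h_1}=1$ is precisely what makes the implicit function step for $\mu$ well-conditioned, and the scale-covariances $R_\fy^s=sD_sR_\fy^1D_s^{-1}$, $L^s=s^{-1}D_sL^1D_s^{-1}$ keep all constants uniform in the a priori unknown scale $s$.

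\emph{Forward direction.} Since $\E(\M{v})-4m\pi\le\de^2$, a standard modulation argument (as in \cite{GKT1}) provides some $\nu_0\in C$ with $\|\M{v}-\M{h}[\nu_0]\|_X\lec\de$, obtained by taking $\nu_0$ to near-minimize $\nu\mapsto\|\M{v}-\M{h}[\nu]\|_X$ and invoking energy coercivity around the harmonic family. I then enforce the localized orthogonality---equivalent to $\ip{z}{\rs{\fy}}=0$ since $\rs{\fy}=\fy^s/s^2$---by the implicit function theorem applied to $\tilde F(\mu):=\ip{(\M{v}-\M{h}[\mu])\vdt\V{f}[\mu]}{\rs{\fy}}$. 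Using $d\M{h}[\mu]\vdt\V{f}[\mu]=h_1^s d\mu$ together with $\ip{h_1^s}{\rs{\fy}}=1$, the derivative of $\tilde F$ at $\nu_0$ equals $-\mathrm{id}$ on $\C$ up to error $O(\|\res{v}\|_X)$, and is invertible for $\de$ small; the contraction principle furnishes a unique $\mu$ close to $\nu_0$. Global uniqueness under $\|z\|_{L^\I_x}\lec\de$ follows because any competing $\tilde\mu$ yields $\|\M{v}-\M{h}[\tilde\mu]\|_{L^\I}\lec\de$, whence $|\tilde\mu-\nu_0|_C\lec\de$ places $\tilde\mu$ in the contraction neighborhood. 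Next, $D_r\V{e}=0$ with $\V{e}(\I)=(1,i,0)$ is a linear first-order ODE along $r\mapsto\M{v}(r)$ that preserves the orthonormality of $(\Re\V{e},\Im\V{e})$ in $T_{\M{v}}\S^2$, so Gronwall yields a unique global solution. Setting $q:=\V{e}\cdt\M{w}$, the identity
\[
\|q\|_{L^2_x}^2=\|\M{w}\|_{L^2_x}^2=2\pi\!\int_0^\I\!\Big(|\M{v}_r|^2+\tfrac{m^2(1-v_3^2)}{r^2}\Big)r\,dr-4\pi m\!\int_0^\I v_{3,r}\,dr=2(\E(\M{v})-4m\pi)\le 2\de^2
\]
(using $v_3(0)=-1$, $v_3(\I)=1$) delivers the required bound.

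\emph{Backward direction.} For $(\mu,q)\in C\times L^2(\de)$, I set up the coupled fixed-point problem in $(z,\V{e})\in X\times Y$:
\[
z = R_\fy^s\!\Big[(q\cdt\V{e})\vdt\V{f}[\mu] - \tfrac{m}{r}\res{v}_3 z + \tfrac{m}{r}h_1^s\ga\Big],\quad D_r\V{e}=0,\quad \V{e}(\I)=(1,i,0),
\]
with $\res{v}=z\cdt\V{f}[\mu]+\ga\hmu$ and $\ga=\sqrt{1-|z|^2}-1$. The first relation encodes \eqref{eq Lz} and forces the orthogonality $\ip{z}{\fy^s}=0$ automatically (by the range of $R_\fy^s$). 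Lemma~\ref{bd on R} with $\th=0,p=2$ bounds the leading linear term by $O(\|q\|_{L^2_x})=O(\de)$ and the remaining terms by $O(\de\|(z,\V{e})\|_{X\times Y})$, so for $\de$ small this is a contraction on a small ball. The resulting $\M{v}=\hmu+\res{v}$ lies in $\Si_m(\de)$: the boundary behavior $\M{v}(0)=-\M{k}$, $\M{v}(\I)=\M{k}$ follows from $\res{v}\in X\Rightarrow\res{v}(0)=\res{v}(\I)=0$ together with $\hmu(0)=-\M{k}$, $\hmu(\I)=\M{k}$, and the energy bound is recovered by reversing the identity above.

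\emph{Lipschitz bounds and the main obstacle.} Subtracting the forward and backward fixed-point equations for two tuples and re-applying the same estimates yields $\|\M{v}^1-\M{v}^2\|_X\sim|\mu^1-\mu^2|_C+\|q^1-q^2\|_{L^2_x}$, and the bounds on $\res{v},z,\V{e}$ are immediate from their defining relations. The $Y$-norm estimate on $\Sc{M}$ follows from \eqref{eq M}, which exhibits $\Sc{M}_r$ as a sum of bilinear expressions in $\res{v}$ and $\V{e}$ integrable in $L^1(dr)$ via the pointwise bound $|\Sc{M}_r|\lec|\res{v}|(|\M{v}_r|+m|h_1^s|/r)$. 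The main technical obstacle is keeping all constants uniform in the a priori unknown scale $s\in(0,\I)$; this is handled by the scale-covariances of $R_\fy^s$ and $L^s$ listed above, together with the scale-invariance of the norms $X,L^2_x,Y$ and of the pairing $\ip{\cdot}{\rs{\fy}}$.
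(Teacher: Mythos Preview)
Your outline is correct and follows essentially the same route as the paper. Two procedural differences are worth noting. First, rather than citing \cite{GKT1} for an initial approximate parameter, the paper obtains $\mu_0$ self-containedly by a ``point orthogonality'': since $v_3(0)<0<v_3(\I)$, the intermediate value theorem gives $s_0$ with $\M{v}(s_0)=e^{\al_0 R}\M{h}(1)$, i.e.\ $z(s_0)=0$; then the bound $\|z\|_X\lec\|q\|_{L^2}$ follows by running $R_{\de(r-1)}^{s_0}$ on expanding intervals. Second, the paper does not treat the parallel-transport ODE for $\V{e}$ directly via Gronwall (note $\M{v}_r\in L^2(rdr)$ is not $L^1(dr)$, so convergence of $\V{e}$ at the endpoints needs the extra observation $|P^{\M{k}}\V{e}_r|\le|P^{\M{k}}\M{v}||\M{v}_r|\in L^1(dr)$); instead it passes to the scalar $\Sc{M}=\V{f}\otimes\V{e}$ and runs a contraction on the coupled system \eqref{eq M}--\eqref{e from M} for $(\Sc{M},\V{e})\in Y\times L^\I$, which also streamlines the Lipschitz difference estimates. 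A minor slip: in your backward step you place $\V{e}$ in $Y$, but $Y$ is the norm for $\Sc{M}$; the correct space for $\V{e}$ is $L^\I$.
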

In particular, we have pointwise smallness 
\EQ{ \label{Linf bd}
 \|\res{v}\|_{L^\I_x} \sim \|z\|_{L^\I_x} \lec \de \ll 1,}
so that we can neglect higher order terms in $z$ or $\res{v}$.
\begin{proof} 
We always assume \eqref{def w}, \eqref{def z} and \eqref{def ga}, 
which define the maps 
\EQ{
 \M{v}\mapsto\M{w}=q\cdt\V{e}, \pq (\M{v},\mu)\mapsto\res{v}\leftrightarrow z\mapsto \ga, \pq (\res{v},\mu)\mapsto \M{v},}
with the Lipschitz continuity
\EQ{ \label{w z from v}
 \pt \|\M{w}^1 - \M{w}^2\|_{L^2} \lec \|\M{v}^1-\M{v}^2\|_X,
 \pq \|\ga^1-\ga^2\|_X \lec \|z^1-z^2\|_X \sim \|\res{v}^1-\res{v}^2\|_X,
 \pr \bigl|\|\M{v}^1-\M{v}^2\|_X - \|\res{v}^1-\res{v}^2\|_X\bigr| \lec |\mu^1-\mu^2|_C.}
The energy can be written as 
\EQ{
  2 \E_m(\M{v})  \pt = 
  \|\M{v}_r\|_{L^2_x}^2 + \left\|\frac{m}{r} R \M{v} \right\|_{L^2_x}^2 
  = \|\M{v}_r\|_{L^2_x}^2 + \left\|\frac{m}{r} P^{\M{v}}\M{k}\right\|_{L^2_x}^2 
  \pr= \|\M{w}\|_{L^2_x}^2 + 2m\ip{\M{v}_r}{P^{\M{v}}\M{k}/r}
  = \|\M{w}\|_{L^2_x}^2 + 4\pi[v_3(\I)-v_3(0)].}
Since $\|P^{\M{k}}{\M{v}}\|_X \lec \E_m({\M{v}})^{1/2}$, $X\subset L^\I_x$ and $|{\M{v}}|=1$, the boundary conditions $v_3(0)=-1$ and $v_3(\I)=1$ make sense in the energy norm. 

Next we consider a point orthogonality. 
Let $\M{v}\in\Si_m(\de)$. 
Since $v_3(0)<0<v_3(\I)$ and $v_3(r)$ is continuous, we have
$\M{v}(s_0)=e^{i\al_0R}\M{h}(1)$ for some $\mu_0=m\log s_0+i\al_0$, so
that $\M{v}=\M{h}[\mu_0]+\res{v}$ is a decomposition satisfying
$\ip{z}{\fy^{s_0}} = 0$ if $\fy(r)=\de(r-1)$. 
In this case $\M{v}$ is recovered from $(\M{w},\mu_0)$ by solving the ODE: 
\EQ{
 \pt L^sz = \M{w}\vdt\V{f}[\mu_0] - \frac{m}{r}\res{v}_3 z + \frac{m}{r}h_1^{s_0}\ga, \pq z(s_0)=0,}
or the equivalent integral equation 
\EQ{
 z = R_{\de(r-1)}^{s_0}\left[\M{w}\vdt\V{f}[\mu_0] - \frac{m}{r}\res{v}_3 z + \frac{m}{r}h_1^{s_0}\ga\right].}
The uniform bound on $R_{\de(r-1)}^s$ can be localized onto any
interval $I\ni s_0$, because $z$ is the solution of the above 
initial value problem. Hence we get, in the same way as in \eqref{en bd 1 z}, 
\EQ{
 \|z\|_{rL^2_x \cap L^\I_x(I)} \lec \|q\|_{L^2_x(I)} + \|z\|_{L^\I_x(I)}\|z\|_{rL^2_x(I)}.}
Since $z(s_0)=0$ and $\|q\|_{L^2_x} \le \de\ll 1$, we get by continuity in $r$ for $I\to(0,\I)$,  
\EQ{ \label{z bd very loc}
  \|z\|_{X \cap L^\I_x} \lec \|q\|_{L^2_x} \lec  \de.}
Thus every $\M{v}\in\Si_m(\de)$ is close at least to some $\M{h}[\mu_0]$, and   
we have $\M{v}^1-\M{v}^2\in X$ by \eqref{w z from v}. 
$\Si_m(\de)$ is a complete metric space with this distance. 

Now we take any $\fy\in C_0^1(0,\I)$ satisfying $\ip{\fy}{h_1}=1$,  
and look for $\mu$ around $\mu_0$ solving the orthogonality   
\EQ{
 F(\mu) := \ip{\M{v}\vdt\V{f}[\mu]}{\rs\fy} = \ip{\res{v}\vdt\V{f}[\mu]}{\rs\fy} = \ip{z}{\rs\fy} = 0.}
Its derivative in $\mu$ is given by
\EQ{
 dF \pt= -\ip{\M{v}\vdt h_1^s\hmu }{\rs\fy}d\mu -i\ip{\M{v}\vdt h_3^s\V{f}[\mu]}{\rs\fy}d\al - \ip{\M{v}\vdt\V{f}[\mu]}{(r\p_r+2)\rs\fy}\frac{ds}{s}
 \pr = -d\mu -\ip{\res{v}\vdt\hmu}{h_1^s\rs\fy}d\mu - \ip{\res{v}\vdt\V{f}[\mu]}{(r\p_r+2)\rs\fy ds/s+ih_3^s \rs\fy d\al}
 \pr= -d\mu + O(\de|d\mu|).}
In particular we have 
\EQ{
 |F(\mu_0)|\lec \de, \pq \frac{\p F}{\p\mu}(\mu_0) = -I + O(\de).}
In addition, both $F(\mu)$ and $\p_\mu F$ are Lipschitz in $\M{v}$. 
Therefore by the implicit mapping theorem, if $\de>0$ is small enough, there exists a unique $\mu\in\C$ for each $v$ such that $F(\mu)=0$ and $|\mu-\mu_0|\lec\de$, and $\M{v}\mapsto \mu$ is Lipschitz. Then
\EQ{
 \|z\|_{L^\I_x} \lec \|\M{v}-\M{h}[\mu_0]\|_{L^\I_x}+|\mu_0-\mu| \lec \de \ll 1,}
and so by the same argument as for \eqref{z bd very loc}, we get $\|z\|_{X}\lec\de$, and in addition, 
\EQ{
 \|z^1-z^2\|_X \lec |\mu^1-\mu^2|_C + \|\M{w}^1-\M{w}^2\|_{L^2}.}
If we have two such $\mu=\mu_1,\mu_2$ with $\|z^j\|_{L^\I}\lec\de$, then 
\EQ{
 |\mu_1-\mu_2|_C \sim \|\M{h}[\mu_1]-\M{h}[\mu_2]\|_{L^\I_x} \pt\lec \|\M{v}-\M{h}[\mu_1]\|_{L^\I_x} + \|\M{v}-\M{h}[\mu_2]\|_{L^\I_x} \lec \de,}
and so the implicit mapping theorem implies that $\mu_1=\mu_2$. 
Thus we get a bijection $\M{v}\mapsto(\mu,\M{w})$ with the Lipschitz continuity
\EQ{
 \|\M{v}^1-\M{v}^2\|_X \sim |\mu^1-\mu^2|_C + \|\M{w}^1-\M{w}^2\|_{L^2_x}.}  

For the frame field $\V{e}$, we consider the matrix $\Sc{M}=\V{f}\otimes\V{e}$, together with the equivalent set of equations \eqref{eq M} and \eqref{e from M}.
Integrating \eqref{eq M} from $r=\I$, we get 
\EQ{ \label{est M diff}
 \pt \|\Sc{M}-e^{i\al}\|_Y \lec \|\res{v}/r\|_{L^2_x}\|\M{v}_r\|_{L^2_x} + \|\res{v}/r\|_{L^2_x}\|h_1^s/r\|_{L^2_x} \lec \de,
 \pr \|\Sc{M}^1-\Sc{M}^2\|_Y \lec |\mu^1-\mu^2|_C + \de\|\M{v}^1-\M{v}^2\|_X + \de\|\V{e}^1-\V{e}^2\|_{L^\I} + \|\res{v}^1-\res{v}^2\|_X,}
while \eqref{e from M} provides 
\EQ{ \label{est e diff}
 \|\V{e}^1-\V{e}^2\|_{L^\I_x} \pt\lec \|\Sc{M}^1-\Sc{M}^2\|_{L^\I_x} + |\mu^1-\mu^2|_C + \|z^1-z^2\|_{L^\I_x}.}
Hence for fixed $\M{v}\in\Si_m(\de)$ (and $\mu$), we can get $(\Sc{M},\V{e})\in Y\times L^\I$ by the contraction mapping principle for the system of \eqref{eq M} and \eqref{e from M}. Moreover we get 
\EQ{
 \pt \|\Sc{M}^1-\Sc{M}^2\|_Y + \|\V{e}^1-\V{e}^2\|_{L^\I_x} \lec \|\M{v}^1-\M{v}^2\|_X.}

If $(\mu,q)\in C\times L^2(\de)$ is given, we consider the system of equations \eqref{eq M}, \eqref{e from M} and 
\EQ{
 z = R_\fy^s\left[\Sc{M} q - \frac{m}{r}\res{v}_3z + \frac{m}{r}h_1^s\ga\right],}
which is equivalent to the $q$ equation in \eqref{coord eq} under the orthogonality $\ip{z}{\rs{\fy}}=0$. 
The last equation provides, through the uniform bound on $R_\fy^s$, 
\EQ{
 \|z^1-z^2\|_X \lec |\mu^1-\mu^2|_C + \|q^1-q^2\|_{L^2_x} + \de\|\Sc{M}^1-\Sc{M}^2\|_Y + \de\|z^1-z^2\|_X.}
Combining this with \eqref{est M diff} and \eqref{est e diff}, we get $(z,\Sc{M},\V{e})$ for any fixed $(\mu,q)$ by the contraction mapping, and moreover they satisfy
\EQ{
 \|z^1-z^2\|_X + \|\Sc{M}^1-\Sc{M}^2\|_Y + \|\V{e}^1-\V{e}^2\|_{L^\I} \lec |\mu^1-\mu^2|_C + \|q^1-q^2\|_{L^2_x}.}

\Del{
Next we consider the frame field $\V{e}$. $D_r\V{e}=0$ is equivalent to the ODE
\EQ{ \label{e ODE}
 \V{e}_r = -\M{v}(\M{v}_r \vdt \V{e}),}
which is locally uniquely solvable for given $\M{v}$, preserving 
\EQ{ \label{e orthnorm}
 |\Re\V{e}|=|\Im\V{e}|=1, \pq \Re\V{e},\Im\V{e}\perp\M{v}, \pq \Im\V{e}=J^v\Re\V{e}.}
Hence $\V{e}$ is determined by them up to rotations.  
Namely, the set of all solutions $\V{e}$ of \eqref{e ODE} satisfying \eqref{e orthnorm} is given by  $\{e^{i\be}\V{e}_0\}_{\be\in\R}$ with any fixed solution $\V{e}_0$. 
In addition, $\V{e}(r)$ converges as $r\to+0$ and as $r\to\I$, since 
\EQ{
 |P^{\M{k}}\V{e}_r| \le |P^{\M{k}}\M{v}||\M{v}_r| \le (h_1^s+\res{v})|\M{v}_r| \in rL^2_x \times L^2_x \subset L^1(dr),}
and $\M{k}\vdt\V{e}\to 0$ because $\V{e}\perp\M{v}$ and $P^{\M{k}}\M{v}\to 0$. 
Thus $\V{e}$ is uniquely determined by $\V{e}(\I)=(1,i,0)$. 

Let $\Sc{M}=\V{f}\otimes\V{e}$ be the real-linear transform $\C\to\C$ at each $r$, defined by 
\EQ{
 \Sc{M} z = \V{f}\vdt(\V{e}\cdt z) = (f_1\vdt e_1+i f_2\vdt e_1)z_1 + (f_1\vdt e_2+if_2\vdt e_2)z_2.} 
Since $\hmu(\I)=e^{i\al}\V{e}(\I)$, we have $\Sc{M}(\I)=e^{i\al}$, and  
\EQ{
 \pt \Sc{M}_r = - \V{f}\otimes\M{v}(\V{e} \vdt \M{v}_r)  + \frac{m}{r}h_1^s \hmu \otimes \V{e} 
 \pn = - \V{f}\otimes \res{v} (q+\frac{m}{r}\nu)  - \frac{m}{r}h_1^s \res{v} \otimes \V{e}.} 
Since $|\nu|=|P^{\M{k}}v|\lec h_1^s+|\res{v}|$, we have 
\EQ{
 |\Sc{M}_r| \lec |qz| + |z|^2/r + |zh_1^s|/r \in L^1(dr).}
Hence $z$ can be recovered from $(q,\mu)$ by the integral equations 
\EQ{
 \pt z = R_\fy^s (\Sc{M}q - \frac{m}{r}\res{v}_3 z + \frac{m}{r} h_1^s \ga),
 \pr \Sc{M} =e^{i\al} + \int_r^\I (f\otimes\res{v}(q+\frac{m}{r}\nu) + \frac{m}{r}h_1^s \res{v}\otimes\V{e}) dr,}
where $\res{v}$ is determined by \eqref{def z}. }
\end{proof}

\section{Decay estimates for the remainder} \label{decay}
In this section we derive dissipative or dispersive space-time estimates of the remainder $\res{v}$ in terms of $z$, from the equation \eqref{qeq} for $q$. First by the smallness of $z$, we obtain from \eqref{en bd z} and \eqref{Str bd z}, 
\EQ{ \label{z bd q}
 \pt \|z(t)\|_X \lec \|q(t)\|_{L^2} \lec \de, 
 \pq \|z/r\|_{L^\I_p} \lec \|q\|_{L^\I_p},}
for all $p\in[1,\I]$. 
Next we estimate the factor $S$, by using  
\EQ{
  \| r \int_r^\I f g dr \|_{L^\I_1} \pt\lec \sum_{j\in\Z} \sum_{k\ge j} 2^{j-k} \|fg\|_{L^1(r\sim 2^k)}
  \pn\sim \|fg\|_{L^1} \le \| f \|_{L^2} \| g \|_{L^2}. }
Then from the expression in~\eqref{qeq} for $S$, we have
\EQ{ \label{SEstDiss} 
  \|S(t)\|_{L^2_1} \lec \| S(t) \|_{r^{-1} L^\I_1} \lec
  ( \| q \|_{L^2_x} + \| z \|_{rL^2_x} + 1)
  \| L_{\M{v}}^* q \|_{L^2_x} \lec \|L_{\M{v}}^* q\|_{L^2_x}.}
In the dispersive case $a_1 = 0$, we avoid the derivative by using expression~\eqref{SchroS}
\EQ{ \label{SEstSchro}
  \| S(t) \|_{L^2_1} \lec \| S(t) \|_{r^{-1} L^\I_1} \lec
  ( \|q \|_{L^2_x} + \| z \|_{rL^2_x} + 1 )
  \| q \|_{L^\I_2} \lec \|q\|_{L^\I_2} \quad (a_1 = 0). }
For the time decay estimates, we treat the dissipative and the 
dispersive cases separately.

\subsection{Dissipative $L^2_t$ estimate}
Here we assume $a_1 > 0$.  By the equation~\eqref{qeq} of $q$, we have
\EQ{
 \p_t\|q\|_{L^2}^2 = -2 a_1 \|L_{\M{v}}^*q\|_{L^2}^2,}
hence
\EQ{
 \|q\|_{L^\I_t L^2_x} + \|L_{\M{v}}^*q\|_{L^2_tL^2_x} \lec \|q(0)\|_{L^2_x} \sim \de.}
Since $R^{s*}_\fy L^{s*}=I$ and $R^{s*}_\fy :L^2\to rL^2$ by Lemma \ref{bd on R} and duality, we have 
\EQ{
 \|q\|_{rL^2_x} \lec \|L^{s*}q\|_{L^2_x} \lec \|L_{\M{v}}^*q\|_{L^2_x} + \|\res{v}\|_{L^\I}\|q\|_{rL^2_x}.}
Since the last term can be absorbed by \eqref{Linf bd} smallness of $\res{v}$, we get
\EQ{
 \|q\|_X \lec \|q/r\|_{L^2_x} + \|L^{s*}q\|_{L^2_x} \lec \|L_{\M{v}}^*q\|_{L^2_x}.}
So by using the bound \eqref{R bd 2 X} on $R_\fy^s$, we obtain 
\EQ{ \label{q L^2 diss}
 \|z\|_{rL^2_t X} \lec \|q\|_{L^2_t X} \lec \|L_{\M{v}}^*q\|_{L^2_{t,x}} \lec \|q(0)\|_{L^2_x} \sim\de,}
and also from \eqref{SEstDiss}, 
\EQ{ \label{S L^2 diss}
 \|S\|_{L^2_t L^2_x} \lec \de.}

\subsection{Dissipative decay}
Next we show the convergence $q\to 0$ as $t\to\I$, by comparing it 
with the free evolution. For $T>0$, let  
\EQ{
 q^T := q-e^{(t-T)a\De_2^{(m-1)}}q(T).}
Then we have 
\EQ{
  q^T_t - a \De_2^{(m-1)} q^T = (iS - aV)q, \pq q^T(T)=0,}
where the potential $V(t,x)$ is given by
\EQ{
  V = \frac{2m(1-v_3)}{r^2} + \frac{m}{r} w_3.}
Multiplying the equation with $q^T$, we get the energy identity 
\EQ{
  \pt \frac{1}{2} \|q^T\|_{L^2_x}^2 + \int_T^t a_1(\|q^T_r\|_{L^2_x}^2+\|\frac{m-1}{r}q^T\|_{L^2_x}^2) dt 
  =  \Re \int_T^t \ip{-aVq+iSq }{ q^T} dt,}
and hence by Schwarz, and using estimate~\eqref{SEstDiss} to 
put $S \in L^2_t L^2_x$, 
\EQ{
 \|q^T\|_{L^\I_{t>T} L^2_x \cap L^2_{t>T} X} 
   \pt\lec \|q/r\|_{L^2_{t>T}L^2_x} + \|Sq\|_{L^2_{t>T}L^1_x}
   + \| q \|_{L^4_{t>T} L^4_x}^2 
   \pr\lec \|q/r\|_{L^2_{t>T}L^2_x} 
   + \| q \|_{L^\I_t L^2_x} \| q \|_{L^2_{t>T} X} \to 0\ 
   \quad (T\to\I),}
Hence $\|q(t)\|_{L^2_x}$ can not converge to a positive number, since $e^{(t-T)a\De_2^{(m-1)}}q(T)\to 0$ as $t\to\I$ for all $T>0$. Thus we obtain 
\EQ{
  \|z(t)\|_X \lec \|q(t)\|_{L^2_x} \to 0 \pq(t\to\I).}

\subsection{Dispersive $L^2_t$ estimate}
Next we consider the case $a_1=0$ (and $a_2\not=0$).
We set (with no loss of generality) $a=i$.
Since the energy identity provides only $L^2_x$ bound on $q$, 
we have to work with the Strichartz estimate in a perturbative way. 
Denoting $H^s := L^s {L^s}^*$, the equation of $q$ is given by
\EQ{
 \pt q_t +i H^{s(0)}q = N_1 + N_2,}
where 
\EQ{ \label{def N1N2}
 N_1 := -2am\frac{h_3^{s(0)}-h_3^{s(t)}}{r^2} q, \pq N_2:= iSq -2am\frac{\res{v}_3}{r^2} q - am \frac{ w_3}{r}q,}
and $S$ is given by \eqref{SchroS}. 
We have
\EQ{
 |N_1| \lec |h_3(s(t)/s(0))||q|/r^2,}
and so
\EQ{
 \|N_1\|_{L^2_t L^1_2} \lec \|h_3(s(t)/s(0))\|_{L^\I_t} \|q\|_{L^2_t L^\I_2}.}
Using \eqref{SEstSchro}, we have 
\EQ{
 \|Sq\|_{L^1_t L^2_x} \le \|S\|_{L^2_t L^2_x}\|q\|_{L^2_t L^\I_x} \lec \|q\|_{L^2_t L^\I_2}^2.}
The other terms in $N_2$ are bounded in $L^1_tL^2_x$ by 
\EQ{
 \|q\|_{L^2_t L^\I_2}^2 + \|z/r\|_{L^2_t L^\I_2}\|q\|_{L^2_t L^\I_2} \lec \|q\|_{L^2_t L^\I_2}^2.}
Now we need the endpoint Strichartz estimate for $H^s$ with fixed scaling $s$:
\begin{lem} \label{Str H}
Let $H^s=L^s {L^s}^*=-\De^{(m-1)}_2+2mr^{-2}(1-h_3^s)$ and $m>1$. Then we have 
\EQ{
 \pt \|e^{-iH^st}\fy\|_{L^\I_t L^2_x \cap L^2_t L^\I_2} \lec \|\fy\|_{L^2_x}
 \pr \|\int_{-\I}^t e^{-iH^s(t-t')}f(t')dt'\|_{L^\I_t L^2_x \cap L^2_t L^\I_2} \lec \|f\|_{L^1_t L^2_x + L^2_t L^1_2},}
uniformly for any fixed $s>0$. 
\end{lem}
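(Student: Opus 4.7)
By the scaling identity $H^s = s^{-2} D_s H^1 D_s^{-1}$ (with $D_s f(r) = f(r/s)$) and the compatible scaling of all norms in the statement under $(t,r)\mapsto(s^2 t, sr)$, I would first reduce to the case $s = 1$. Setting $H := H^1 = -\De_2^{(m-1)} + V$ with $V(r) = 2m(1-h_3)/r^2 \geq 0$, the relation $h_3' = mh_1^2/r$ yields $V(r) \sim 4m/r^2$ as $r \to 0$ (so the effective centrifugal barrier at the origin is $(m+1)^2/r^2$, strictly stronger than free) and $V = O(r^{-2m-2})$ at infinity, together with the repulsivity
\[
 \p_r(r^2 V) = -2m^2 h_1^2/r \leq 0, \qquad rV' + 2V = -2m^2 h_1^2/r^2 \leq 0.
\]
The factorization $H = L L^*$ together with $\ker L^* \cap L^2 = \{0\}$ makes $H$ nonnegative self-adjoint with no zero mode.

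The first main step is a Kato-type smoothing estimate $\|r^{-1} e^{-itH}\fy\|_{L^2_{t,x}} \lec \|\fy\|_{L^2_x}$, which I would establish by a positive-commutator (Morawetz) argument with a radial multiplier $A = \frac{1}{2}(F(r)\p_r + \p_r F(r))$ for a smooth, bounded, monotone $F$ (say $F(r) = r/(1+r)$). A direct computation gives
\[
  [H, iA] = 4\,\p_r^* F'(r)\,\p_r + \frac{2(m-1)^2 F(r)}{r^3} - \frac{F(r)(rV'+2V)}{r} + (\text{lower order}),
\]
each leading contribution being nonnegative thanks to $F' \geq 0$, $(m-1)^2 \geq 1$, and the repulsivity of $V$ (which gives $-(rV'+2V) = 2m^2 h_1^2/r^2 \geq 0$). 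Integrating $\p_t\ip{Au(t)}{u(t)} = \ip{[H,iA]u(t)}{u(t)}$ and using the boundedness of $A$ on $L^2$ for the boundary terms yields the smoothing estimate; its dual and the Duhamel (inhomogeneous) version then follow by $TT^*$ and Christ--Kiselev.

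To pass from weighted $L^2_{t,x}$ smoothing to the dyadic endpoint $L^2_t L^\I_2$, I would decompose radially into annuli $A_j = \{2^j < r < 2^{j+1}\}$ and use the 1D Sobolev-type inequality
\[
  \|u\|_{L^\I(A_j)}^2 \lec \|r^{-1}u\|_{L^2(A_j,\, r\,dr)}\bigl(\|u_r\|_{L^2(A_j,\, r\,dr)} + \|r^{-1}u\|_{L^2(A_j,\, r\,dr)}\bigr),
\]
then sum in $\ell^1_j$ by Cauchy--Schwarz to obtain $\|u\|_{L^\I_2}^2 \lec \|r^{-1}u\|_{L^2_x}\|u\|_X$ with $X$ the energy space from Section~\ref{splitting}. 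Energy conservation $\|H^{1/2}u(t)\|_{L^2} = \|H^{1/2}\fy\|_{L^2}$ together with $\|u\|_X \sim \|L^* u\|_{L^2} = \|H^{1/2}u\|_{L^2}$ (cf.~\eqref{en bd 1 z}) controls $\|u(t)\|_X \lec \|\fy\|_{L^2}$; combining with the smoothing estimate and Cauchy--Schwarz in $t$ yields the desired bound. The inhomogeneous estimate is obtained by the same scheme applied to the dual smoothing and Duhamel's formula.

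The chief obstacle is the failure of the double endpoint Strichartz in 2D for the free Laplacian, and the fact that the bare centrifugal operator $-\De_2^{(m-1)}$ is borderline at this endpoint. It is precisely the additional potential $V$ -- effectively raising the centrifugal barrier at the origin from $(m-1)^2/r^2$ to $(m+1)^2/r^2$ -- that makes the commutator $[H, iA]$ nonnegative at $r=0$ and closes the Morawetz argument. Without this extra repulsivity the Kato smoothing estimate (and hence the endpoint Strichartz) would fail, which is why the lemma requires both $m > 1$ and the specific ``sufficiently repulsive'' structure of the harmonic-map--induced potential.
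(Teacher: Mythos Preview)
Your Kato/Morawetz step is fine, but the passage from smoothing to the dyadic endpoint $L^2_t L^\I_2$ has a genuine gap. You claim that energy conservation $\|H^{1/2}u(t)\|_{L^2}=\|H^{1/2}\fy\|_{L^2}$ together with $\|u\|_X\sim\|H^{1/2}u\|_{L^2}$ yields $\|u(t)\|_X\lec\|\fy\|_{L^2}$. It does not: it yields $\|u(t)\|_X=\|\fy\|_X$, which is a half-derivative stronger than the $L^2$ norm of the data, and is all that the statement of the lemma allows. With only $\|r^{-1}u\|_{L^2_{t,x}}\lec\|\fy\|_{L^2}$ and the pointwise-in-$t$ inequality $\|u(t)\|_{L^\I_2}^2\lec\|r^{-1}u(t)\|_{L^2_x}\|u(t)\|_X$, neither H\"older combination closes: putting $L^\I_t$ on the $X$-factor requires $\|r^{-1}u\|_{L^1_tL^2_x}$ (unavailable), while Cauchy--Schwarz in $t$ requires $\|u\|_{L^2_tX}$, and for $e^{-itH}\fy$ one has $\|u_r(t)\|_{L^2_x}\equiv\|\fy_r\|_{L^2_x}$, so $\|u\|_{L^2_tX}=\I$ for any nonzero $\fy$. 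Thus Kato smoothing alone cannot be upgraded to the $L^2_tL^\I_2$ endpoint by a Sobolev-on-annuli argument; some genuinely dispersive input is needed.

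The paper's route is different and supplies exactly that input. It first proves the double endpoint estimate for the \emph{unperturbed} operator $-\De_2^{(m-1)}$ (any $m>1$) directly, using the explicit Bessel kernel to get weighted dispersive bounds $\|e^{it\De_2^{(l)}}\fy\|_{L^{\I,\mu}_x}\lec|t|^{-1+\mu}\|\fy\|_{L^{1,-\mu}_x}$ for $-l\le\mu\le 1/2$, and then runs the Keel--Tao bilinear scheme with bilinear real interpolation to reach the endpoint $L^2_tL^\I_2$ (Theorem~\ref{thm:endpoint}). Only afterwards is the potential $V=2m(1-h_3^s)r^{-2}$ added, by a perturbation lemma (Lemma~\ref{thm:perturb}): if $H_0$ satisfies the double endpoint and $H=H_0+V$ satisfies the Kato estimate, then $H$ satisfies the double endpoint too. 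The Kato estimate for $H^s$ is obtained from the resolvent bound under the repulsivity condition~\eqref{eq:repulsive}, which your computation of $rV'+2V$ correctly verifies. So your Morawetz ingredient survives as one of two inputs, but the other input --- the endpoint for $-\De_2^{(m-1)}$ --- cannot be replaced by the Sobolev-plus-energy shortcut you propose.
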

This Lemma will be proved in Section~\ref{end}.
Hence if $|\log(s(t)/s(0))|\ll 1$ for all $t$, then we have 
\EQ{
 \|q\|_{L^\I_t L^2_x \cap L^2_t L^\I_2} \lec \|q(0)\|_{L^2} \sim \de,}
and also from \eqref{SEstSchro}
\EQ{ \label{S L^2 disp}
 \|S\|_{L^2_t L^2_x} \lec \de. }

\subsection{Dispersive decay}
Next we prove the following asymptotics of scattering type for $q$ and $z$:
\EQ{
   e^{-it\De_2^{(m-1)}}q(t)\to\exists q_+\IN{L^2_x}, \pq 
   z\to 0 \IN{L^\I_x} \pq (t\to\I). }
For the scattering of $q$, we further expand the equation 
\EQ{ \label{qExpand}
  q_t -i \De_2^{(m-1)} q = N_0 + N_2,}
where $N_2$ is as in \eqref{def N1N2}, and 
\EQ{ \label{def N0}
  N_0 :=  -2am\frac{1-h_3^{s}}{r^2}q}
Then the global Strichartz bound implies that 
\EQ{
  \|N_0\|_{L^2_t L^1_2(T,\I)} \to 0,\pq 
  \|N_2\|_{L^1_t L^2_x(T,\I)}\to 0 }
as $T\to\I$. By Strichartz (for $\De_2^{(m-1)}$) once again, we get 
the scattering of $q$. 

For the vanishing of $z$, we use the inversion formula 
\EQ{
 z = R_\fy^s g, \pq g= \Sc{M}q  + r^{-1}m(h_1^s\ga - \res{v_3}z). }
Since $R_\fy^s$ is bounded $L^2_x\to L^\I$, the latter two terms contribute at most with $\|z\|_{L^\I_x}\|z/r\|_{L^2_x}\ll \|z\|_{L^\I_x}$, hence we may drop them. 
Also we may replace $q$ by its asymptotic free solution $q^\I:=e^{it\De_2^{(m-1)}}q_+$. 
Moreover we may approximate $q_+$ by nicer functions. 
Hence we assume that $\hat q:=\F_{m-1}q_+\in C_0^\I(0,\I)$. 
Then we may further replace the free solution with the stationary phase part: 
\EQ{
 q^\I(t,r) \pt= C_mt^{-1}e^{ir^2/(4t)} \int_0^\I J_{m-1}(r\r/(2t))e^{i\r^2/(4t)}q_+(\r)\r d\r
 \pr= C_mt^{-1}e^{ir^2/(4t)}\hat q(r/(2t)) + \Sc{R},}
where the error is bounded by Plancherel
\EQ{
 \|\Sc{R}\|_{L^2_x} \sim \|(1-e^{ir^2/(4t)})q_+\|_{L^2_x} \lec t^{-1}\|r^2q_+\|_{L^2_x} \to 0.}
Now that spatially local vanishing is clear 
(eg. it follows from $\| R^s_\fy M q(t) \|_{r L^\infty}
\lec \| q^\I(t) \|_{L^\I} \to 0$),
we may extract the leading term of $R_\fy^s$ for large $x$. 
We assume that $s(t)\in L^\I_t$ and $\supp\fy^s\subset(0,b)$ for a fixed $b\in(0,\I)$. 
Then for $r>b$ we have 
\EQ{
 (R^s_\fy g)(r) \pt= o(1) + \int_b^r
  \frac{h_1^s(r)}{h_1^s(r'')}g(r'')dr'' 
 \pr= o(1) + \int_b^r (r'/r)^m
  g(r')dr' \mbox{ as } r \to \I.}
Thus we are reduced to showing that
\EQ{
 G \chi:= \int_b^r (\r/r)^m \Sc{M}(t,\r) t^{-1}e^{i\r^2/(4t)}\chi(\r/t)  d\r \to 0 \IN{L^\I_r}}
for any $\chi\in C_0^\I(0,\I)$. By partial integration on $(\r/t)e^{i\r^2/(4t)}$, we have 
\begin{multline}
 r^{m}G \chi = (i/2) [\r^{m-1}\Sc{M}(\r) e^{i\r^2/(4t)}\chi(\r/t)]_b^r\\
  - \int_b^r \bigl[(m-1)\Sc{M}(\r)\chi(\r/t)/\r + \Sc{M}(\r)\chi'(\r/t)/t \\
  + \Sc{M}_r(\r)\chi(\r/t)\bigr]\r^{m-1}e^{i\r^2/(4t)} d\r,
\end{multline}
The right hand side is bounded by $r^m/t$, using $|\chi(\r/t)|\lec\r/t$ for the first, second and fourth terms, $|\chi'(\r/t)|\lec 1$ for the third, and $\Sc{M}_r \in L^\I_t L^1(dr)$ for the fourth term. 
Thus we obtain $\|z(t)\|_{L^\I_x}\to 0$. 

\section{Parameter evolution} \label{parameter}
It remains to control the asymptotic behavior of the parameter 
$\mu(t)$ of the harmonic map part of the solution. 
Its evolution is determined by differentiating the localized orthogonality condition 
\EQ{
  0 \pt= \p_t\ip{z}{\fy^s} = \ip{\res{v}_t\vdt\V{f}}{\fy^s} + 
  \ip{\res{v}\vdt\V{f}_t}{\fy^s} + \ip{z }{ \p_t\fy^s},}
and each term on the right is expanded by using 
\EQ{
 \pt \res{v}_t = \M{v}_t - \hmu_t = -(aL_{\M{v}}^*q) \cdt\V{e} - h_1^s\dot\mu\cdt\V{f},
 \pr \V{f}_t = -ih_3^s\dot\al \V{f} - h_1^s\dot\mu h[\mu],
 \pq \p_t\fy^s = -\frac{\dot s}{s}r\p_r\fy^s.}
Plugging this into the above and then dividing it by $s^2$, we get
\EQ{ \label{eq mu}
  \dot\mu = -\ip{\Sc{M} a L_{\M{v}}^*q }{ \rs\fy} -  
  \ip{h_1^s\dot\mu\ga }{\rs\fy} - (z \; | \; (\frac
  {\dot\mu_1}{m}r\p_r - i\dot\mu_2 h_3^s ) \rs\fy). }
The last two terms are bounded by
\EQ{
 |\dot\mu| \|z\|_{L^\I} (\|\fy\|_{L^1}+\|r\p_r\fy\|_{L^1}),}
and so absorbed by the left hand side since $\|z\|_{L^\I}\lec\de\ll 1$. 

Since $|\nu|=|P^v \M{k}|\lec h_1^s+|\res{v}|$ and hence
\EQ{
  |\M{v}_r| \lec |q| + |z|/r + h_1^s/r, }
we get from \eqref{eq M}, 
\EQ{
 |\Sc{M}_r| \lec |qz| + |z|^2/r + |zh_1^s|/r.}

The leading (first in the r.h.s) term in \eqref{eq mu} can be estimated,
using $[\Sc{M}a, L_{\M{v}}^*] = \Sc{M}_r a$, as follows  
\EQ{
  |\ip{\Sc{M} a L_{\M{v}}^*q }{\rs\fy}| \pt\lec s^{-1} 
 (\|\Sc{M}_r\|_{L^2_x}+\|\Sc{M}\|_{L^\I_x}) \|q\|_{rL^2_x}
 \pr\lec s^{-1} (\|q\|_{L^2_x}+\|z/r\|_{L^2_x}+1)\|q\|_{rL^2_x}.}
Hence using that 
$\|z\|_{L^\I_x}\lec\|z\|_X\lec\|q\|_{L^2}\lec\de\ll 1$, we get
\EQ{ \label{muL2}
  \|s\dot\mu\|_{L^2_t} \lec \|q/r\|_{L^2_{t,x}} \lec \|q\|_{L^2_t L^\I_2}. }


\section{Partial integration for the parameter dynamics}
\label{normal form}
Now we want to integrate in $t$ the right hand side of \eqref{eq mu}, 
which is not bounded in $L^1_t$. 
The key idea is to employ the $q$ equation \eqref{qeq}, by 
identifying a factor of $L_{\M{v}}L_{\M{v}}^*q$, through a partial 
integration in space. 

For the spatial integration, we first freeze the phase factor $\Sc{M}$. 
Since $\hmu=\M{v}=-\M{k}$ at $r=0$, we have $\Sc{M}(t,0)=e^{i\ti\al}$, i.e. $\V{f}(t,0)=e^{i\ti\al}\V{e}(t,0)$ for some real $\ti\al(t)$. 
Then $D_t \V{f}(t,0) = i\ti\al'(t) \V{f}(t,0) - i S(t,0)\V{f}(t,0)$, and so 
\EQ{
 \ti\al'(t) = S(t,0)  + \al'(t).}
We decompose  
\EQ{
 \Sc{M} = e^{i\ti\al} + \res{\Sc{M}},}
and rewrite the leading term of \eqref{eq mu} as follows. 
Let $c=\|h_1\|_{L^2}^{-2}$. Since $L_{\M{v}}=L^s+m\res{v}_3/r$ and $L^sh_1^s=0$, we have 
\EQ{ \label{eqmu lead}
  \ip{\Sc{M} aL_{\M{v}}^*q }{ \rs\fy} \pt= a e^{i\ti\al}
  \ip{L_{\M{v}}^*q }{\rs\fy} + \ip{\res{\Sc{M}} a L_{\M{v}}^*q }{\rs\fy}
  \pr= ae^{i\ti\al}\bigl[\ip{L_{\M{v}}^*q }{ \rs{(\fy - c h_1)}} +
  \ip{mq\res{v}_3/r }{c\rs{h_1}}\bigr] \pr \quad + 
  \ip{ \res{\Sc{M}} a q }{ L_{\M{v}} \rs\fy }
  + \ip{ \res{\Sc{M}}_r a q }{ \rs\fy }.}
The second term is bounded by $\|q\res{v}r^{-3}\|_{L^1_x}\lec
\|q/r\|_{L^2_x}\|z/r^2\|_{L^2_x}$, and the last two terms
are bounded by 
\EQ{
 \|q/r\|_{L^2_x} (\|\Sc{M}_r/r\|_{L^2_x} + \|\res{\Sc{M}}/r\|_{L^\I_x}).}
where the last factor is further bounded by using that 
$\res{\Sc{M}}=0$ at $r=0$
\EQ{ 
 \|\res{\Sc{M}}/r\|_{L^\I_x} \lec \|\Sc{M}_r/r\|_{L^2_x} \lec \|q/r\|_{L^2_x} + \|z/r^2\|_{L^2_x} \lec \|q\|_{L^\I_2}.}
We further rewrite the remaining (main) term. By the definition of $c$, we have 
\EQ{
  \ip{\fy - c h_1 }{ h_1} = 1 - c \|h_1\|_{L^2}^2 = 0,}
and so we have 
\EQ{
  \fy^s - ch_1^s = L^{s*}R_\fy^{s*}(\fy^s-ch_1^s),}
where the operator $R_\fy^s$ was defined in \eqref{def R^s}. Let 
\EQ{
  \psi := R_\fy^*(\fy - c h_1) = -\frac{c}{m-1}r^{1-m} + O(r^{1-3m}) 
  \pq(r\to\I), }
where the asymptotic form easily follows from the fact that  
\EQ{
  \psi(r) = -c(h_1(r)r)^{-1}\int_r^\I h_1(r')^2r'dr' \pq (r\gg 1).}
Then we have, by using equation~\eqref{qeq} for $q$, 
\EQ{
 \pn(-aL_{\M{v}}^*q|\rs{(\fy - c h_1)}) \pt= \ip{-aL^s L_{\M{v}}^*q}{\psi^s/s}
 \pr= \ip{q_t - i Sq}{\psi^s/s} + \ip{amq}{L_{\M{v}} \res{v}_3 r^{-1}
   \psi^s/s},}
and, using~\eqref{def w}, the last term is bounded by
\EQ{
 \|q(|q|+|\res{v}/r|)r^{-2}\|_{L^1_x} \lec (\|q/r\|_{L^2_x} +
 \|z/r^2\|_{L^2_x})^2 \lec \|q\|_{L^\I_2}^2.}
For $m \geq 2$, $\psi \in L^2_\I$, and so 
\EQ{
  \| \ip{ S q }{ \psi^s/s } \|_{L^1_t} \lec \| S \|_{L^2_t L^2_x} \| q \|_{L^2_t L^\I_2} \lec \de\|q\|_{L^2_t L^\I_2},}
either by \eqref{S L^2 diss} or \eqref{S L^2 disp}. 
The last two terms of~\eqref{eq mu} are bounded in $L^1_t$ by
\EQ{
  \|s \dot \mu\|_{L^2_t} \|z/r\|_{L^2_t L^\I_x} \|\rs{(r|\fy|+r^2|\fy_r|)}\|_{L^1_x} \lec \|q\|_{L^2_t L^\I_2}^2,}
where we used \eqref{muL2} and \eqref{z bd q}. 

Thus we have obtained
\EQ{
  \| \dot\mu - e^{i\ti\al}\ip{q_t }{ \psi^s/s} \|_{L^1_t} 
  \lec \de \|q\|_{L^2_t L^\I_2}. }
Integrating by parts in $t$, the leading term is rewritten as 
\EQ{ \label{pint t}
  e^{i\ti\al}\ip{q_t }{ \psi^s/s}
 \pt= \p_t \ip{e^{i\ti\al}q}{\psi^s/s} 
 - i(s\dot\al + sS(t,0))\ip{ e^{i\ti\al}q }{\rs{\psi}} 
 \pr + \dot s\ip{e^{i\ti\al}q }{ (r\p_r+1)\rs{\psi}}.}
The last term can be bounded in $L^1_t$ by using \eqref{muL2}, 
\EQ{
  \|s\dot\mu\|_{L^2_t} \|\ip{q }{ (r\p_r+1)\rs\psi}\|_{L^2_t} 
   \lec \|q\|_{L^2_t L^\I_2} \|\ip{q }{ (r\p_r+1)\rs\psi}\|_{L^2_t}.}
If $m=2$ or $m>3$, then $(r\p_r+1)\psi\in L^1$, and so the above is further bounded by 
$\|q\|_{L^2_t L^\I_2}^2$. 
When $m=3$, we need some extra effort to bound the last factor in 
$L^2_t$ -- this is done in the next section.  

If $m>2$, we have for the leading term 
\EQ{
  |\ip{e^{i\ti\al}q }{ \psi^s/s}| 
  \lec \|q\|_{L^2_x} \|\psi\|_{L^2_x} \lec \|q(0)\|_{L^2_x}, }
while for $m=2$ this term can be infinite from the beginning. 
We will show in Section~\ref{m=2} that the time difference 
$[\ip{e^{i\ti\al}q}{\psi^s/s}]_0^t$ can be controlled for finite $t$,
but still may become unbounded as $t \to \I$ for some initial data.

This also means that the second last term of \eqref{pint t} is
beyond our control when $m=2$, and so in this case we force it to 
vanish by making the assumptions $a_2=0$ and $v_2=0$.   
For the other cases ($m > 2$), we should estimate $S(t,0)$, 
for which we use in the Schr\"odinger case ($a=i$) that 
\EQ{
 S =  Q - \int_r^\I \frac{2Q}{r} dr, \pq Q=\frac{1}{2}|q|^2 + \frac{m}{r}w_3 = O(|q|^2+|z/r|^2+qh_1^s/r),}
since $|w_3|\lec |q||\nu|$ and $|\nu| \lec h_1^s + |z|$. 
Thus we get at each $t$, using \eqref{z bd q},  
\EQ{
 \|S\|_{L^\I_x} \lec \|q\|_{L^\I_2}^2 + s^{-1}\|q\|_{L^\I}.}
Then the second term in \eqref{pint t} is bounded in $L^1_t$ 
\EQ{
 \pt\|q\|_{L^2_t L^\I_2}^2\|\ip{q}{\psi^s/s}\|_{L^\I_t} + (\|s\dot\al\|_{L^2_t}+\|q\|_{L^2_tL^\I_x})\|\ip{q}{\rs\psi}\|_{L^2_t}  
 \pr\lec \|q\|_{L^2_t L^\I_2}(\de+\|\ip{q}{\rs\psi}\|_{L^2_t}),}
where we used \eqref{muL2}. 
If $m>3$, then $\psi\in L^1$ and hence the last factor 
$\|\ip{q}{\rs\psi}\|_{L^2_t}$ is bounded by 
$\|q\|_{L^2_t L^\I_2}\lec\de$. 
Its estimate for $m=3$ is deferred to the next section. 

In the dissipative case $a_1>0$, we estimate simply by \eqref{qeq} at each $t$  
\EQ{ \label{S diss est}
 \|S\|_{L^\I_x} \pt\lec (\|q/r\|_{L^2_x} + \|z/r^2\|_{L^2_x} + \|h_1^s/r^2\|_{L^2_x})\|L_{\M{v}}^*q\|_{L^2_x},}
and hence the second term in $L^1_t$ is bounded by 
\EQ{
 \pt\|q\|_{L^2_t L^\I_2}\|q\|_{L^2_t X} + (\|s\dot\mu\|_{L^2_t}+\|q\|_{L^2_t X})\|\ip{q}{\rs\psi}\|_{L^2_t}
 \pr\lec \|q\|_{L^2_t L^\I_2}(\de+\|\ip{q}{\rs\psi}\|_{L^2_t}),}
where we used \eqref{muL2} and \eqref{q L^2 diss}. 

Thus we have obtained all the necessary estimates to prove
Theorem~\ref{thm1} when $m>3$. 
Furthermore, its proof for $m=3$ will be complete once we show
\EQ{ \label{rest for m=3}
 \|\ip{q}{\rs\psi}\|_{L^2_t} + \|\ip{q}{(r\p_r+1)\rs\psi}\|_{L^2_t}
 \lec \de,}
which will be done in Section~\ref{m=3}.

For Theorem \ref{thm2}, it remains to derive the asymptotic formula 
\eqref{s asy} from the leading term $\ip{q}{\psi^s/s}$, and to show 
that all of the asymptotic behavior (1)--(6) can be realized by choice 
of the initial data $\M{u}(0,x)$ -- this is done in Section~\ref{m=2}.

\section{Special estimates for $m=3$} \label{m=3}
In this section we finish the proof of Theorem \ref{thm1} 
by showing \eqref{rest for m=3}. 
It suffices to estimate the leading term for $r\to\I$: 
\EQ{ \label{L2 inner prod}
  \|\ip{r^{-2}\chi(r) }{ q}\|_{L^2_t} \lec \de,}
with $\chi\in C^\I$ satisfying $\chi(r)=0$ for $r<1$ and $\chi(r)=1$ for $r>2$, 
since the rest decays at slowest $O(r^{-8})\in L^1_x$, for which we can simply 
use $q\in L^2_t L^\I_x$. Once the above is proved, we can conclude that 
\EQ{
  \|\mu\|_{L^\I_t} \lec |\mu(0)| + \|q\|_{L^\I_tL^2_x\cap L^2_tL^\I_2}. }
The boundedness of $\mu$ and the scattering of $q$ imply that
the ``normal form'' correction $\ip{e^{i \tilde{\alpha}} q }{ \psi^s/s}$
converges to zero, and so $\mu(t)$ is convergent as $t\to\I$. 

To estimate \eqref{L2 inner prod}, we use perturbation from the free 
evolution $e^{at\De_2^{(2)}}$: 
\EQ{
 \dot q - a\De_2^{(2)} q = N_0 + N_2,}
where $N_0$ and $N_2$ are as in \eqref{def N0} and \eqref{def N1N2}, satisfying 
\EQ{
  N_0 \in r^{-2}\LR{r/s}^{-4}L^2_tL^\I_x, \pq N_2 \in L^1_tL^2_x. }

For the contribution of $N_2$ as well as the initial data, 
we use the following estimate. 
\begin{lem} \label{free L^2_t/r^2}
For any $l>0$, any $a\in\C^\times$ with $\Re a\ge 0$, and any 
functions $g(r),f(r)$, and $F(t,r)$, we have 
\EQ{
  \pt \|\ip{g }{ e^{at\De_2^{(l)}}f}\|_{L^2_t(0,\I)}  
   \lec \|r^2g\|_{L^\I_x}\|f\|_{L^2_x}, 
  \pr \|\ip{g}{\int_{-\I}^t e^{a(t-s)\De_2^{(l)}}F(s)ds}\|_{L^2_t(\R)}
   \lec \|r^2g\|_{L^\I_x}\|F\|_{L^1_t L^2_x}.}
\end{lem}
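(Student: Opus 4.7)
The plan is to diagonalize $e^{at\Delta_2^{(l)}}$ via the Fourier--Bessel (Hankel) transform $\F_l$ and then peel off $g$ by a H\"older-type pairing. Using Plancherel for $\F_l$ together with the spectral representation $\Delta_2^{(l)} = -\F_l^{-1}\rho^2\F_l$, one rewrites
\begin{equation*}
 I(t):=\ip{g}{e^{at\Delta_2^{(l)}}f} = c\int_0^\I \F_l g(\rho)\,\overline{\F_l f(\rho)}\,e^{-\bar a t\rho^2}\,\rho\,d\rho.
\end{equation*}
Substituting $u=\rho^2$ presents this as the (complex) Laplace transform of $H(u):=\F_l g(\sqrt u)\,\overline{\F_l f(\sqrt u)}$ evaluated at $\bar a t$.

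The next step is to show that the map $H\mapsto\int_0^\I H(u) e^{-\bar a t u}\,du$ is bounded $L^2_u(0,\I)\to L^2_t(0,\I)$ for every $a\in\C^\times$ with $\Re a\ge 0$: when $\Re a=0$ this is Plancherel for the one-sided Fourier transform, while when $\Re a>0$ the $TT^*$ kernel has modulus dominated by $(\Re a)^{-1}/(u+u')$, so Hilbert's integral inequality applies. Combined with Cauchy--Schwarz in $\rho$ after returning to the $\rho$-variable, this gives
\begin{equation*}
 \|I\|_{L^2_t(0,\I)}\lec\|\F_l g\|_{L^\I_\rho}\,\|f\|_{L^2_x}.
\end{equation*}

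The remaining pointwise bound $\|\F_l g\|_{L^\I}\lec\|r^2 g\|_{L^\I}$ is obtained by substituting $u=r\rho$ in the integral defining $\F_l g(\rho)$, which yields
\begin{equation*}
 |\F_l g(\rho)|\le\|r^2 g\|_{L^\I}\int_0^\I\frac{|J_l(u)|}{u}\,du,
\end{equation*}
and the integral converges precisely because $l>0$: near $u=0$, $|J_l(u)|/u\sim c_l u^{l-1}$ is integrable iff $l>0$; the tail is controlled by the standard asymptotic $|J_l(u)|\lec u^{-1/2}$. The inhomogeneous Duhamel bound then follows from the homogeneous one by Minkowski's integral inequality applied in the $s$-variable, each time-slice being handled after a time translation by the homogeneous estimate just proved. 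The main obstacle is establishing the Laplace-transform boundedness uniformly across the regimes $\Re a=0$ and $\Re a>0$ -- the two extremes require genuinely different classical inputs (Plancherel versus Hilbert) -- while the hypothesis $l>0$ plays its essential role in making the Bessel-kernel integral $\int_0^\I|J_l(u)|/u\,du$ finite.
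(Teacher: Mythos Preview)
Your proof is correct and follows the same strategy as the paper: diagonalize via the Hankel transform $\F_l$, reduce to an $L^2\to L^2$ bound for the one-dimensional Laplace/Fourier transform, and control $\|\F_l g\|_{L^\I}$ by $\|r^2g\|_{L^\I}$ using $\int_0^\I|J_l(u)|u^{-1}du<\I$ for $l>0$. The only differences are in two technical substeps: for $\Re a>0$ the paper bounds the Laplace transform by Minkowski and scaling (giving constant $\sim a_1^{-1/2}$) rather than your $TT^*$ plus Hilbert's inequality (giving $\sim a_1^{-1/2}$ as well), and it deduces the Duhamel estimate by a short duality argument rather than your direct Minkowski in $s$; both routes are standard and equally valid.
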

\begin{proof}
We start with the estimate for the free part. Let $\hat g=\F_l g$ and $\hat{f}=\F_l f$. 
The above $L^2_t$ norm equals by Plancherel in space,
\EQ{ \label{Plan x}
  \|\ip{\hat g}{ e^{-atr^2}\hat{f}}\|_{L^2_t(0,\I)} \sim \|\int_0^\I e^{-at\s}G(\s)d\s\|_{L^2_t(0,\I)},}
where we put
\EQ{
 G(\s):=\hat g(\s^2)\bar{\hat f(\s^2)}.}
If $a_1>0$, then \eqref{Plan x} is bounded by Minkowski 
\EQ{
 \pt\le \|\int_0^\I e^{-a_1t\s}|G(\s)|d\s\|_{L^2_t(0,\I)}
  \pn\le \int_0^\I e^{-a_1\s} \|t^{-1}G(\s/t)\|_{L^2_t(0,\I)} d\s
  \pr\le \|G\|_{L^2_\s(0,\I)} \int_0^\I \s^{-1/2}e^{-a_1\s}d\s 
  \lec \|G\|_{L^2_\s(0,\I)}.}
If $a_1=0$, then $a_2\not=0$ and \eqref{Plan x} is bounded by Plancherel in $t$, 
\EQ{
 \le \|\int_0^\I e^{-ia_2t\s}G(\s)ds\|_{L^2_t(\R)} \sim \|G\|_{L^2_\s(0,\I)}.}
Thus in both cases we obtain  
\EQ{
   \|\ip{g }{ e^{at\De_2^{(l)}}f}\|_{L^2_t(0,\I)} 
   \pt\lec \|G(\s)\|_{L^2_\s(\R)} \lec \|\hat g\|_{L^\I}\|\hat f\|_{L^2} \sim \|\hat g\|_{L^\I} \|f\|_{L^2_x}.}
Then the first desired estimate follows from 
\EQ{
 |\hat g(\r)| \le \int_0^\I |J_l(r\r)||g(r)|rdr \le \|r^2g\|_{L^\I_x} \int_0^\I |J_l(r)|\frac{dr}{r} \sim \|r^2g\|_{L^\I_x},}
since $|J_l(r)|\lec\min(r^l,r^{-1/2})$ for $r>0$. 

By duality, the estimate on the Duhamel term is equivalent to 
\EQ{
 \|\int_0^\I \la(s+t)e^{\bar{a}s\De_2^{(l)}}g(x)ds\|_{L^\I_t L^2_x}
 \lec \|r^2g\|_{L^\I_x} \|\la\|_{L^2_t}, }
which is equivalent to 
\EQ{
 \|\int_0^\I \la(t)e^{\bar{a}t\De_2^{(l)}}g(x)dt\|_{L^2_x} \lec \|r^2g\|_{L^\I_x} \|\la\|_{L^2_t},}
which is dual to the first estimate. 
\end{proof} 

For the potential part $N_0$, we transfer the equation to 
$\R^6$ by $u=r^{-2}q$ and consider
\EQ{
  u_t -a \De_6^{(0)} u = r^{-2}N_0. }
Then thanks to the decay of the potential, we have 
\EQ{
  r^{-2}N_0 \in L^2_t L^{10/7}_x (\R^6) 
  \subset L^2_t \dot H^{-1/5}_{3/2}(\R^6),}
as long as $s(t)$ is away from $0$ and $\I$. 
Then by the endpoint Strichartz or the energy estimate on $\R^6$, 
the corresponding Duhamel term is bounded in 
$L^2_t \dot H^{-1/5}_3(\R^6)$, and since 
$|\na_x r^{-4}\chi(r)|\lec r^{-5}$, we have 
$r^{-4}\chi \in \dot H^1_{5/4}(\R^6)\subset \dot H^{1/5}_{3/2}(\R^6)$. 
Thus to summarize, we have
\EQ{
  \|\ip{r^{-2}\chi }{ q}\|_{L^2_t} \lec 
  \|q(0)\|_{L^2_x} + \|q\|_{L^2_t L^\I_{2,x}}.}

{\bf Completion of the proof of Theorem~\ref{thm1}:}
Let initial data $\M{u}(0)$ be specified as in Theorem~\ref{thm1}.
The existence of a unique local-in-time solution $\M{u}(t)$ in 
the given spaces can be deduced by working in the $(\mu, q)$ 
variables (using the bijection of Lemma~\ref{coord lem}) and 
using estimates similar to those of Sections~\ref{decay} 
and~\ref{parameter}. The details are carried out in the 
Schr\"odinger case ($a=i$) in~\cite{GKT2}, and carry over to 
the general case in a straightforward way (in fact, there are 
well-established methods for energy-space local existence 
in the dissipative case, starting with the pioneering 
work~\cite{Str} on the heat-flow). It follows from
this local theory that the solution continues as long as 
$\mu(t)$ is bounded and $q$ is bounded in 
$L^\I_t L^2_x \cap L^2_t L^\I_2$.  
 
For $m > 3$, the estimates of the previous four sections 
give the boundedness of $q$ and $\mu$ which ensure the solution
is global, as well as the convergence of $\mu(t)$.
The convergence to a harmonic map then follows from      
the estimates of Section~\ref{decay}.
$\Box$

\section{Special estimates for $m=2$, $a>0$, $v_2=0$} \label{m=2}

Let $m=2$ and (with no further loss of generality) $a=1$. 
By the bijective correspondence $\M{v}\leftrightarrow(\mu,q)$, it is 
clear that $v_2=0$ is equivalent to $\mu,q\in\R$. 
It remains to control the leading term for the parameter dynamics   
\EQ{
 \ip{q }{ \psi^s/s}. }
In particular, we will show that this can diverge to $\pm\I$, or 
oscillate between them for certain initial data. 

First by the asymptotics for $r\to\I$, we have 
$\psi+cr_{1<}^{-1}\in L^2_x$, where we denote
\EQ{
 r_{a<}^{-1} = \CAS{ r^{-1} &(r>a) \\ 0 &(r\le a)}
 \pq r_{<b}^{-1} = \CAS{ r^{-1} &(r<b) \\ 0 &(r\ge b)}
 \pq r_{a<b}^{-1} = \CAS{ r^{-1} &(a<r<b) \\ 0 &\text{(otherwise)}}}
Hence we may replace $s^{-1}\psi^s$ by $-cr_{s<}^{-1}$ modulo $o(1)L^\I_t$. 

Next we want to replace $q$ by the free solution 
$q^0:=e^{t\De_2^{(1)}}q(0)$. 
For that we use the following pointwise estimate to bound $q - q_0$: 
\begin{lem} \label{pt bd fund sol}
Let $\Re a>0$ and $l\ge 1$. Then for any function $g(r)$ satisfying $|g(r)|\le \LR{r}^{-1}$, we have 
\EQ{
 |e^{at\De_2^{(l)}}g(r)| \lec \min(1,1/r,r/t).}
\end{lem}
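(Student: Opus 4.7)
The plan is to work from the explicit heat-kernel representation of $e^{at\De_2^{(l)}}$ in terms of the modified Bessel function $I_l$: for $\Re a>0$,
\EQ{
  e^{at\De_2^{(l)}}g(r) = \int_0^\I \frac{1}{2at} e^{-(r^2+r'^2)/(4at)} I_l\!\left(\frac{rr'}{2at}\right) g(r') r' dr'.
}
The three quantities inside $\min(1,1/r,r/t)$ come from three distinct mechanisms, which I would establish separately and then combine. The first, $\lec 1$, is immediate from the $L^\I$-boundedness of the 2D heat semigroup $e^{at\De}$ on $\R^2$ (whose operator norm is $\lec |a|/a_1$ when $\Re a>0$), applied to the angular extension $u(x)=g(r)e^{il\th}$, which has $\|u\|_{L^\I(\R^2)}\le 1$.

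For the bound $\lec 1/r$, which is non-trivial only when $r\ge 1$, I would decompose $g=g_{\text{near}}+g_{\text{far}}$ where $g_{\text{near}}$ is $g$ restricted to $\{r'\le r/2\}$. On the support of $g_{\text{far}}$ one has $|g_{\text{far}}(r')|\le \LR{r'}^{-1}\le 2/r$, so its contribution is $\lec 1/r$ via the $L^\I$ bound. For $g_{\text{near}}$, the heat convolution at $|x|=r$ sees only points at distance $\ge r/2$; the Gaussian tail factor $e^{-cr^2/t}$ combined with $\|g_{\text{near}}\|_{L^\I}\le 1$ yields a contribution $\lec e^{-cr^2/t}\lec t/r^2\lec 1/r$ in the relevant regime $t\lec r^2$.

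The bound $\lec r/t$ is relevant only for $r\le\sqrt t$ (otherwise $1/r\le r/t$ already suffices), and captures the vanishing at the origin forced by $l\ge 1$. I would split the $r'$-integral at $r'=t/r$, which is precisely where the Bessel argument $|rr'/(2at)|$ crosses unity. For $r'\le t/r$, the small-argument estimate $I_l(y)\lec y$ (valid for $0\le y\le 1$ and any $l\ge 1$, directly from the defining power series) gives a kernel bound $|K^{(l)}_{at}(r,r')|\lec rr'/(|a|t)^2$; integrating against $\LR{r'}^{-1}r'dr'$ with the moment bound $\int_0^\I r'^2\LR{r'}^{-1}e^{-cr'^2/t}dr'\lec t$ produces exactly $r/t$. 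For $r'\ge t/r$, the large-argument asymptotic $I_l(y)\sim e^y/\sqrt y$ combined with the Gaussian factor yields $|K^{(l)}_{at}(r,r')|\lec e^{-c(r-r')^2/t}/\sqrt{rr't}$, and a Gaussian-tail estimate using $|r-r'|\ge t/r-r\gec t/r$ bounds this contribution by $r^2/t^{3/2}\le r/t$ since $r\le\sqrt t$.

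The main obstacle I expect is controlling $I_l$ at complex argument: one should use the series majorization $|I_l(z)|\le I_l(|z|)$ and carefully track the modified Gaussian rate $e^{-a_1(r^2+r'^2)/(4|a|^2 t)}$ coming from the complex denominator $1/(4at)$. Choosing the split at $r'=t/r$ is precisely what places each regime under a clean Bessel bound, and the matching with the small- and large-argument asymptotics becomes uniform in $l$. Incidentally the $l=1$ case is sharp; for $l\ge 2$ the same argument actually yields the stronger bound $r^l/t^{(l+1)/2}$, which one relaxes to the uniform $r/t$ in the statement.
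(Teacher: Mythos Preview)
Your route through the $I_l$-kernel is genuinely different from the paper's. The paper keeps the angular variable, writing
\[
  e^{2at\De_2^{(l)}}g(r)=\frac{C}{t}\int_0^\I\!\int_{-\pi}^{\pi}
  e^{-a(r^2-2r\rho\cos\th+\rho^2)/t+il\th}\,g(\rho)\rho\,d\th\,d\rho,
\]
then integrates once by parts in $\th$ to gain a factor $r\rho/(lt)$, and splits only at $|\rho-r|=r/2$, choosing between the two forms on each piece. The advantage of the $\th$-representation is that the exponent $r^2-2r\rho\cos\th+\rho^2=|x-y|^2$ is manifestly nonnegative, so for complex $a$ one immediately has $|e^{-a|x-y|^2/t}|=e^{-a_1|x-y|^2/t}$ and the Gaussian decay is automatic.

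This is exactly where your proposal breaks. The series majorization $|I_l(z)|\le I_l(|z|)$ is fine for small argument, but in the large-argument regime it yields
\[
  |K_{at}^{(l)}(r,r')|\lec \frac{1}{\sqrt{rr't}}
  \exp\Bigl(\frac{-a_1(r^2+r'^2)+2|a|rr'}{4|a|^2 t}\Bigr),
\]
and whenever $a_1<|a|$ (any nonreal $a$) the exponent is \emph{positive} near $r=r'$: you do not recover the $e^{-c(r-r')^2/t}$ you wrote down earlier. The fix is to use the complex asymptotic $I_l(z)\sim e^z/\sqrt{2\pi z}$ for $|\arg z|<\pi/2$ directly, so that $e^z$ recombines with $e^{-(r^2+r'^2)/(4at)}$ into $e^{-(r-r')^2/(4at)}$ \emph{before} taking modulus---or simply to work with the $\th$-integral as the paper does. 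There is also a smaller slip in your $1/r$ step: from $\|g_{\mathrm{near}}\|_{L^\I}\le 1$ you only get $e^{-cr^2/t}\lec t/r^2$, which is not $\lec 1/r$ on all of $t\lec r^2$ (take $t=r^2$, $r$ large). Use instead $\|g_{\mathrm{near}}\|_{L^1(\R^2)}\lec r$ together with the pointwise Gaussian bound to obtain $\tfrac{r}{t}e^{-cr^2/t}=\tfrac{1}{r}\cdot\tfrac{r^2}{t}e^{-cr^2/t}\lec 1/r$.
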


\begin{proof} 
Let $a_1=\Re a$. By using the explicit kernel we have 
\EQ{ \label{free sol g}
  e^{2at\De_2^{(l)}}g(r) \pt = \frac{C}{t}\int_0^\I\int_{-\pi}^{\pi}
  e^{-a\frac{r^2-2rq\cos\th+q^2}{t}+il\th} g(q)q d\th dq, }
and the integral in $\th$ can be rewritten by partial integration on
$e^{i\th}$ as 
\EQ{
  \int_{-\pi}^\pi \frac{rq}{ilt}\sin\th
  e^{-a\frac{r^2-2rq\cos\th+q^2}{t}+il\th} g(q)q d\th dq. }
The double integral for $|q-r|>r/2$ is bounded by using the second
form by
\EQ{
 \int_0^\I \frac{rq}{t^2} e^{-a_1\frac{r^2+q^2}{4t}} \min(q,1)dq \lec
r e^{-a_1\frac{r^2}{4t}}\min(t^{-1/2},t^{-1}) \lec \min(1,r/t,1/r),} 
and that for $|q-r|<r/2$ is bounded by using the first form by
\EQ{
  t^{-1}\int_0^\I \int_{-\pi}^{\pi} e^{-a_1\frac{(r-q)^2}{t}}
  e^{-a_1\frac{r^2 \th^2}{8t}} d\th \min(r,1)dq
  \pt\lec t^{-1} t^{1/2} (t/r^2)^{1/2} \min(r,1) \pr\lec \min(1,1/r), }
and by the second form by $\lec r^2/t \times 1/r = r/t$.
\end{proof}

The nonlinear part of $q$ contributes as 
\EQ{ \label{q nonlin}
  \ip{r_{s<}^{-1} }{ q-q^0} = 
  -\int_0^t \ip{e^{\De_2^{(m-1)}(t-t')}r_{s(t)<}^{-1}}{V(t')q(t')} dt',}
where the potential term is given by 
\EQ{
 V = \frac{2m(1-v_3)}{r^2} + \frac{m}{r} w_3
   = \frac{2m(1-h_3^s)}{r^2} + O(\res v_3/r^2) + O( q/r ).}
The contribution from the last two parts is estimated with the 
$r^{-1}$ bound from the above Lemma, thus bounded by  
\EQ{
  (\|\res v_3/r^2\|_{L^2_{t,x}} + 
  \| q/r \|_{L^2_{t,x}} )\|q/r\|_{L^2_{t,x}} \lec \|q(0)\|_{L^2_x}^2. }

We need to be more careful to estimate the other term $q(1-h_3^s)/r^2$.
First by Schwarz and the pointwise estimate, we have
\EQ{
  \pt\left|\int_0^t (e^{a\De_2^{(m-1)}(t-t')}r_{s(t)<}^{-1} \; | \; 
  r^{-2}q(1-h_3^s)) dt'\right|^2 \pr \le\|q/r\|_{L^2_{t,x}}^2 \int_0^t 
  \int_0^\I \min(s(t)^{-1},r^{-1},r/(t-t'))^2 \LR{r/s(t')}^{-4m} 
  \frac{dr}{r} dt', }
where we also used that $|1-h_3(r)|\lec \LR{r}^{-2m}$. 
It suffices to bound the last double integral. 
Let $\t=t-t'$. For $0<\t<s(t)^2$, the $r$ integral is bounded by
\EQ{
  \pt \int_0^{\t/s(t)}\frac{r^2}{\t^2}\frac{dr}{r} + 
  \int_{\t/s(t)}^{s(t)} \frac{1}{s(t)^2}\frac{dr}{r} 
  + \int_{s(t)}^\I \frac{1}{r^2}\frac{dr}{r}
  \pn \lec s(t)^{-2}(1 + \log(s(t)^2/\t)),}
hence its $\t$ integral is bounded by
\EQ{
  \int_0^{s(t)^2} s(t)^{-2} (1+\log(s(t)^2/\t)) d\t 
  = 1 + \int_0^1 |\log\th|d\th < \I. }
For $s(t)^2<\t$, the $r$ integral is bounded by
\EQ{
 \int_0^{s(t')} \frac{r^2}{\t^2} \frac{dr}{r} + \int_{s(t')}^\I \frac{r^2 s(t')^{4m}}{\t^2 r^{4m}} \frac{dr}{r}
  \lec \frac{s(t')^2}{\t^2},}
and its $\t$ integral is bounded by square of 
\EQ{
  \|s(t')/\t\|_{L^2_\t(s(t)^2,t)} \pt \lec 
  \|s(t)/\t\|_{L^2_\t(s(t)^2,t)} + 
  \|(s(t)-s(t-\t))/\t\|_{L^2_\t(s(t)^2,t)}
  \pr\lec 1 + \int_0^1 \|\dot s(t-\th\t)\|_{L^2_\t(s(t)^2,t)}d\th
  \pr\lec 1 + \int_0^1 \|\dot s\|_{L^2}\th^{-1/2} d\th \lec 1. }
Thus we obtain
\EQ{
  |\ip{r_{s(t)<}^{-1} }{ q-q^0}| \pt\lec 
  \|q/r\|_{L^2_{t,x}}(\|\res v_3/r^2\|_{L^2_{t,x}} 
  + \|\dot s\|_{L^2_t} + 1) \pn\lec \|q(0)\|_{L^2},}
namely we may replace $q$ by the free solution $q^0$ in the leading 
asymptotic term. 

Furthermore, we can freeze the scaling parameter because
\EQ{
 \pn|\ip{r_{s(t)<}^{-1}-r_{s(0)<}^{-1} }{ q^0}|
  \pt\lec \|r_{s(t)<}^{-1}-r_{s(0)<}^{-1}\|_{L^2} \|q^0(t)\|_{L^2}
 \pr\lec |[\log s]_0^t|^{1/2}o(1) 
 \lec o(1)(|[\log s]_0^t| + 1).}
Thus we obtain 
\EQ{
 (1+o(1))[2\log s]_0^t = -c[\ip{r_{s(0)<}^{-1}}{q^0}]_0^t + O(1), \pq(t\to\I)}
where $O(1)$ is convergent. 

The leading term is further rewritten in the Fourier space by using that 
\EQ{
 \F_1[r_{s(0)<}^{-1}](\r) \pt= \r^{-1}\int_{s(0)\r}^\I J_1(r)dr 
 \pr= \r^{-1}J_0(s(0)\r) = \r_{<1/s(0)}^{-1} + s(0)R(s(0)\r),\pq \exists R\in L^2_x.}
Let $\hat q_0:=\F_1 q(0)$. By Plancherel we have $\|q(0)\|_{L^2_x}=\|\hat q_0\|_{L^2_x}$ and 
\EQ{ 
 [-\ip{r_{s(0)<}^{-1}}{q^0}]_0^t 
 \pt= \ip{(1-e^{-t r^2})r_{<1/s(0)}^{-1}}{\hat q_0} + O(1)
 \pr= \ip{r^{-1}_{1/\sqrt{t}<1/s(0)}}{ \hat q_0} + O(1)
 \pr=\ip{\F_1 r^{-1}_{1/\sqrt{t}<1/s(0)}}{ q(0)} + O(1)
 \pr= 2\pi \int_{s(0)}^{\sqrt{t}} q(0,r)dr + O(1).}

Thus we obtain (using that $c=\|h_1\|_{L^2_x}^{-2}=\pi^{-2}$),
\EQ{ \label{asy in q}
 (1+o(1))[\log s]_0^t = \frac{1}{\pi}\int_{s(0)}^{\sqrt{t}} q(0,r)dr + O(1),}
and the error term $O(1)$ converges to a finite value as $t\to\I$.  

{\bf Completion of the proof of Theorem~\ref{thm2}:}
As in the proof of Theorem~\ref{thm1}, we now have all the estimates
to conclude the solution is global (in particular, $\mu(t)$ remains
finite by the above formula and estimates), and the convergence to
the harmonic map family follows from the estimates of
Section~\ref{decay}. It remains to consider the asymptotics of $s(t)$. 
 
Since $q(0)\in L^2_x$ does not require $\int_1^\I|q(0,r)|dr<\I$, 
it is easy to make up $q(0)\in L^2$, for any given $s(0)\in(0,\I)$, 
such that the first term on the right of \eqref{asy in q} attains arbitrarily given 
$\limsup\ge\liminf\in[-\I,\I]$ as $t\to\I$. 
In particular, all of the asymptotic behaviors (1)-(6) in 
Theorem~\ref{thm2} can be realized by appropriate choices of  
$(q(0),s(0))$, for which Lemma~\ref{coord lem} ensures existence of 
corresponding initial data $\M{u}(0)\in\Si_2$. 

Using that $v_2=0$, we can further rewrite the leading term 
in terms of $\M{v}$. Since $\V{e}=(v_3,i,-v_1)$, we have
\EQ{
 q = \M{w} \vdt \V{e} 
   = v_1v_{3r}-v_3v_{1r} + \frac{2v_1}{r} 
   = -\be_r + \frac{2v_1}{r},}
where $\be$ is defined by $\M{v}=(\cos\be,0,\sin\be)$. 
Hence we have 
\EQ{
 (1+o(1))[\log s]_0^t 
    = \frac{2}{\pi} \int_{s(0)}^{\sqrt{t}} \frac{v_1(0,r)}{r} dr 
      + O(1),}
where $O(1)$ converges as $t\to\I$. 

$\Box$

\section{Proofs of the key linear estimates} \label{pf lin ests}

\subsection{Uniform bound on the right inverse $R_\fy$}
\begin{proof}[Proof of Lemma \ref{bd on R}]
Let $s=1$ and omit it. It suffices to prove
\EQ{ \label{sup bound R}
 \pt \|R_\fy g\|_{r^\th L^\I} \lec \|\fy\|_{r^{-\th}L^1}\|g\|_{r^{\th+1}L^1_\I}, 
 \pr \|R_\fy^* f\|_{r^{-\th-1}L^\I} \lec \|\fy\|_{r^{-\th}L^1_\I} \|f\|_{r^{-\th}L^1_\I}.}
From this we get by duality, 
\EQ{
 \pt \|R_\fy g\|_{r^{\th}L^\I_1} \lec \|\fy\|_{r^{-\th}L^1_\I} \|g\|_{r^{\th+1}L^1},  
 \pr \|R_\fy^* f\|_{r^{-\th-1}L^\I_1} \lec \|\fy\|_{r^{-\th}L^1} \|f\|_{r^{-\th}L^1},}
and the bilinear complex interpolation covers the intermediate cases.  

It remains to prove \eqref{sup bound R}. 
We rewrite the kernel of $R_\fy$ 
\EQ{
 R_\fy g = \iint \frac{h_1(r)}{h_1(r'')}
  \chi(r,r',r'')\bar\fy(r')h_1(r')r' g(r'') dr'' dr',}
where $\chi(r)$ is defined by
\EQ{
 \chi(r,r',r'')=\CAS{1 &(r'<r''<r),\\ -1 &(r<r''<r'),\\ 0 &\text{(otherwise).}} }
We decompose the double integral dyadically such that $r\sim 2^j$, 
$r'' \sim 2^k$ and $r' \sim 2^l$, 
and let 
\EQ{
 \pt A_j = 2^{-\th j}\|R_\fy g\|_{L^\I(r\sim 2^j)}, \pq B_j = 2^{(\th+1)j}\|R_\fy^*f\|_{L^\I(r\sim 2^j)}, 
 \pr \fy_l = 2^{\th l}\|\fy\|_{L^1(r\sim 2^l)}, \pq g_k = 2^{(-\th-1)k}\|g\|_{L^1(r\sim 2^k)}, 
 \pq f_k = 2^{\th k}\|f\|_{L^1(r\sim 2^k)}.}

For $R_\fy$, we have
\EQ{
 A_j \lec \sum_{\substack{j-1\le k\le l+1 \\ l-1 \le k \le j+1}} 2^{-m|j|-\th j+m|k|+\th k - m|l|-\th l}\fy_l g_k.}
The sums over $k$ are bounded for $j-1\le k\le l+1$ and for $l-1\le k\le j+1$ respectively by
\EQ{
 2^{-m|j|-\th j - m|l|-\th l}\fy_l \sup_k g_k \times\CAS{\max(2^{(-m+\th)j},1) \max(2^{(m+\th)l},1), \\
 \max(2^{(-m+\th)l},1)\max(2^{(m+\th)j},1),}}
and since the exponential factors are bounded, after summation over $l$ we get 
\EQ{
 \|R_\fy g\|_{r^{\th} L^\I} \lec \sum_l \sup_k \fy_l g_k,}
as desired. 
For $R_\fy^*$ in \eqref{sup bound R}, we have
\EQ{
 B_j \lec \sum_{\substack{k-1\le j\le l+1 \\ l-1 \le j \le k+1}} 2^{m|j|+\th j-m|k|-\th k - m|l|-\th l}\fy_l f_k.}
Then the sums over $k$ and $l$ are bounded in both cases by
\EQ{
 2^{m|j|+\th j}\sup_{k,l}\fy_l f_k \min(2^{mj-\th j},1) \min(2^{-mj-\th j},1),}
and hence $\|R_\fy^*f\|_{r^{-\th-1} L^\I} \lec \sup_{l,k} \fy_l g_k$, as desired. 

Next we show the optimality. 
Let $b\in\Z$, and choose any $g$ which is piecewise constant on each dyadic interval $(2^j,2^{j+1})$, $\supp g\subset[2^b,\I)$, and $g\ge 0$. Then for $0<r\le 2^b$ we have
\EQ{
 R_\fy g(r) \pt= h(r)\int_{2^b}^\I \int_{2^b}^a h(s)^{-1}\fy(a)h(a)a g(s)ds da
 \pr\gec h(r) \sum_{j\ge b} \sum_{k=b}^{j-1} 2^{m|k|+\th k} g_k 2^{-m|j|-\th j} \fy_j
 \gec h(r) \sum_{j\ge b} g_{j-1}\fy_j,}
where we denote $g_k=\|g\|_{r^{\th-1}L^\I(r\sim 2^k)}$ and $\fy_j=\|\fy\|_{r^{-\th}L^1(r\sim 2^j)}$. 
Choosing a test function $\psi\in C_0^\I(0,\I)$ satisfying $\psi\ge 0$, $\supp\psi\subset(0,2^b)$ and $\ip{h}{\psi}=1$, we see that $\fy_j\in\ell^{p'}_j(j>b)$ is necessary since we can choose arbitrary non-negative $g_k\in\ell^p_k(k>b)$. 
Similarly by choosing $\supp g\subset(0,2^b]$ and $\supp\psi\subset(2^b,\I)$, we see that $\fy_j\in\ell^{p'}_j(j<b)$ is also necessary.  
\end{proof}

\subsection{Double endpoint Strichartz estimate}
\label{end}

Lemma \ref{Str H} holds for more general radial potentials. 
We call 
\EQ{ \label{eq:KS}
   \left\| r^{-1}\int_0^t e^{i(t-s)H} f(s) ds 
   \right\|_{L^2_{t,x}} \lec \|rf\|_{L^2_{t,x}}}
the Kato estimate for the operator $H$, and 
\EQ{ \label{eq:DE}
  \|u\|_{L^2_t(L^\I_2)} \lec \|u(0)\|_{L^2_x} + \|iu_t+Hu\|_{L^2_t(L^1_2)}.}
the double endpoint Strichartz estimate for $H$. 
Lemma \ref{Str H} is a consequence of the following. 

\begin{thm}\label{thm:endpoint}
For any $m>0$, the double endpoint Strichartz \eqref{eq:DE} holds for radially symmetric $u(t,x)=u(t,|x|)$ and  
$H=\De_2^{(m)}=\p_r^2+r^{-1}\p_r-m^2r^{-2}$.  
\end{thm}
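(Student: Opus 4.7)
The approach I would take is a dimension-raising reduction. Setting $v(t,r) := r^{-m}u(t,r)$, a direct computation gives the identity
\EQ{
 r^{-m}\Delta_2^{(m)}(r^m v) = v_{rr} + (2m+1)r^{-1}v_r = \Delta_d^{(0)}v, \pq d := 2m+2,
}
so that $iu_t + \Delta_2^{(m)}u = F$ on $\R^2$ is equivalent to the \emph{free} radial Schr\"odinger equation $iv_t + \Delta_d^{(0)}v = r^{-m}F$ on $\R^d$, and $\|u\|_{L^2(r\,dr)}$ is proportional to $\|v\|_{L^2(r^{d-1}dr)}$. Since $d > 2$ for every $m>0$, the critical two-dimensional obstruction to the endpoint Strichartz is eliminated on the $v$ side, and the full $\R^d$ Strichartz machinery becomes available.

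The plan is then to prove a frequency-localized endpoint estimate for $e^{it\Delta_d^{(0)}}$. Interpolating the Bernstein bound $\|P_N e^{it\Delta_d^{(0)}}v_0\|_{L^\I_x} \lec N^{d/2}\|P_N v_0\|_{L^2}$ against the dispersive bound $\lec |t|^{-d/2}N^{-d/2}\|P_N v_0\|_{L^2}$ gives a time-integrable $\min$ whose $L^2_t$-norm is bounded uniformly in $N$ precisely because $|t|^{-d/2}\in L^2_t(1,\I)$ when $d>2$. Summing in $\ell^2_N$ via Littlewood--Paley and handling the Duhamel term by a $TT^*$/Christ--Kiselev argument yields a Besov-valued endpoint Strichartz
\EQ{
 \|v\|_{L^2_t \dot B^0_{\infty,2}(\R^d)} \lec \|v_0\|_{L^2} + \|r^{-m}F\|_{L^2_t \dot B^0_{1,2}(\R^d)}.
}
Transferring back via $u = r^m v$ and using a dyadic radial Sobolev embedding on annuli $\{r\sim 2^j\}\subset\R^d$ (together with Bessel asymptotics, which on radial functions couple frequency and physical dyadic scales) then identifies the weighted Besov norm of $v$ with the spatial-dyadic norm $\|u\|_{L^\I_2}$ of~\eqref{Lpq}, and dually $\|F\|_{L^1_2}$, delivering \eqref{eq:DE}.

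The main obstacle is precisely this last matching step: the frequency-side Besov space $\dot B^0_{\infty,2}(\R^d)$ and the spatial-dyadic $L^\I_2$ on the 2D side are a priori distinct, and bridging them relies on the $q=2$ exponent in~\eqref{Lpq}, which absorbs the logarithmic losses that would defeat a pure Lebesgue endpoint in $d=2$. An alternative route is to prove the Kato smoothing~\eqref{eq:KS} first from a Morawetz positive-commutator identity (using the repulsive centrifugal potential $m^2/r^2$) and then derive \eqref{eq:DE} from a dyadic Sobolev embedding on spatial shells; this has the advantage of working directly in the physical-dyadic norm but demands a sharper commutator identity to control derivatives. In either implementation, the repulsivity of $m^2/r^2$ is the essential structural input and the $q=2$ dyadic summation is what rescues the 2D endpoint.
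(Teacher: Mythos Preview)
Your dimension-raising identity $r^{-m}\Delta_2^{(m)}(r^m v)=\Delta_d^{(0)}v$ with $d=2m+2$ is correct (the authors themselves note it in a suppressed remark), but it is essentially a relabeling: radial Schr\"odinger on $\R^d$ is governed by the Bessel function $J_{(d-2)/2}=J_m$, so no genuinely new machinery appears. The standard Keel--Tao endpoint $L^2_tL^{2d/(d-2)}_x(\R^d)$ transfers back to a weighted Lebesgue estimate on $\R^2$ that is \emph{not}~\eqref{eq:DE}.

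More seriously, the step you yourself call ``the main obstacle'' is a genuine gap. The frequency-dyadic Besov norm $\dot B^0_{\infty,2}(\R^d)$ and the spatial-dyadic norm $L^\infty_2$ are not comparable on radial functions under $u=r^m v$. For $v$ frequency-localized to $|\xi|\sim N$ one has $|v(r)|\lesssim\min\bigl(N^{d/2},\,r^{-(d-1)/2}N^{1/2}\bigr)\|v\|_{L^2}$, hence $|u(r)|\lesssim\min\bigl((rN)^{(d-2)/2},(rN)^{-1/2}\bigr)N\|v\|_{L^2}$; squaring and summing over spatial shells $r\sim 2^j$ gives $\|u\|_{L^\infty_2}\lesssim N\|v\|_{L^2}$, a full derivative away from what you claim. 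Bessel asymptotics couple physical and frequency scales only through the one-sided uncertainty relation $rN\gtrsim 1$, which is not enough to exchange a frequency-$\ell^2$ for a spatial-$\ell^2$. Your $L^2_t\dot B^0_{\infty,2}$ bound, even if established, does not imply $L^2_tL^\infty_2$. Your alternative via Kato smoothing is circular as stated: Lemma~\ref{thm:perturb} needs the double endpoint for a reference $H_0$ as input, so you still owe the present theorem for some operator.

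The paper's route is quite different and avoids frequency decomposition altogether. From the Bessel bound $|J_m(s)|\lesssim s^{-\mu}$ for $-m\le\mu\le 1/2$ it reads off a \emph{weighted} dispersive estimate $\|e^{it\Delta_2^{(m)}}\varphi\|_{L^{\infty,\mu}}\lesssim|t|^{-1+\mu}\|\varphi\|_{L^{1,-\mu}}$, then runs a Keel--Tao bilinear decomposition $\sum_j I_j(f,g)$ over dyadic time separations, proving $|I_j|\lesssim 2^{j(\alpha+\beta)/2}\|f\|_{L^2_tL^{1,-\alpha}}\|g\|_{L^2_tL^{1,-\beta}}$ on an exponent region whose convex hull contains a neighborhood of $(0,0)$. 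A bilinear \emph{real} interpolation then upgrades $\ell^\infty_j$ to $\ell^1_j$, and the real-interpolation functor applied to the weighted $L^{1,-\alpha}$ scale is precisely what produces the spatial-dyadic spaces $L^1_2$ and (by duality) $L^\infty_2$ in~\eqref{eq:DE}. The spatial weights are built in from the start through the weighted dispersion; there is no frequency/physical matching to perform.
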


\begin{lem} \label{thm:perturb}
Suppose $H_0$ and $H=H_0+V$ are both self-adjoint on 
$L^2(\R^2)$ and $|x|^2 V(x) \in L^\I(\R^2)$. 
Assume that the Kato estimate \eqref{eq:KS} holds for $H$, 
and that the double endpoint Strichartz estimate \eqref{eq:DE} holds for $H_0$. 
Then we have the double endpoint Strichartz also for $H$. 
The same is true when we restrict all functions to radially symmetric ones, if $V$ is also symmetric. 
\end{lem}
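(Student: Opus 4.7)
The strategy is to derive the double endpoint Strichartz estimate for $H$ by perturbing off $H_0$, using the Kato smoothing estimate for $H$ to control the perturbative term $Vu$.

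Before starting, I would record a few elementary preparatory facts. \emph{(i)} Since $\||x|^2 V\|_{L^\infty} < \infty$, a dyadic computation on shells $r \sim 2^j$ yields both $\|Vu\|_{L^1_2} \lec \|u/r\|_{L^2_x}$ and $\|rVu\|_{L^2_x} \lec \|u/r\|_{L^2_x}$, with implicit constants depending only on $\|r^2 V\|_{L^\infty}$. \emph{(ii)} The embedding $\|u/r\|_{L^2_x} \lec \|u\|_{L^\infty_2}$ follows from $\int_{r \sim 2^j} |u|^2 r^{-1} dr \lec \|u\|_{L^\infty(\text{shell})}^2$ and summation in $\ell^2_j$. \emph{(iii)} The inhomogeneous Kato estimate assumed for $H$ implies, by standard $TT^*$ duality, the homogeneous form $\|r^{-1} e^{itH} u_0\|_{L^2_{t,x}} \lec \|u_0\|_{L^2}$.

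For the homogeneous part I would apply the Duhamel formula $e^{itH} = e^{itH_0} + i \int_0^t e^{i(t-s)H_0} V e^{isH}\, ds$. The first summand is bounded in $L^2_t L^\infty_2$ by the $H_0$ Strichartz hypothesis; the second, by $H_0$ inhomogeneous Strichartz, is bounded by $\|V e^{isH} u_0\|_{L^2_t L^1_2}$, which by \emph{(i)} is $\lec \|e^{isH} u_0/r\|_{L^2_{t,x}}$, and by \emph{(iii)} this is $\lec \|u_0\|_{L^2}$.

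For the inhomogeneous part, let $T_H f(t) = \int_0^t e^{i(t-s)H} f(s)\,ds$ and $T_0 f(t) = \int_0^t e^{i(t-s)H_0} f(s)\,ds$. The crucial point is to play off the \emph{two} retarded resolvent identities
\[
  T_H f \;=\; T_0 f + i T_0(V T_H f) \;=\; T_0 f + i T_H(V T_0 f),
\]
obtained by expanding Duhamel in the two possible orders. From the first identity, $H_0$ Strichartz together with \emph{(i)} yields
\[
  \|T_H f\|_{L^2_t L^\infty_2} \;\lec\; \|f\|_{L^2_t L^1_2} + \|T_H f/r\|_{L^2_{t,x}}.
\]
To control the last term without circularity, I would invoke the \emph{second} identity: the contribution of $T_0 f$ is bounded, via \emph{(ii)} and $H_0$ Strichartz, by $\|f\|_{L^2_t L^1_2}$, while for $T_H(V T_0 f)$ the Kato estimate for $H$ gives
\[
  \|T_H(V T_0 f)/r\|_{L^2_{t,x}} \;\lec\; \|rVT_0f\|_{L^2_{t,x}} \;\lec\; \|T_0 f/r\|_{L^2_{t,x}} \;\lec\; \|f\|_{L^2_t L^1_2},
\]
where the middle step uses $|rV| \le \|r^2 V\|_{L^\infty}/r$. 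Combining, $\|T_H f/r\|_{L^2_{t,x}} \lec \|f\|_{L^2_t L^1_2}$, which closes the Strichartz bound.

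The main obstacle that this scheme has to overcome is precisely the circularity one encounters using only a single resolvent identity: the perturbative term then reduces to $\|u/r\|_{L^2_{t,x}}$, whose natural bound via the same identity again needs the very estimate one is proving. Switching identities at the right step relocates the factor $V$ inside $T_H$ rather than $T_0$, which is exactly the combination for which the Kato estimate for $H$ gives a clean input. The radial restriction enters only through the scale-invariant shell estimates in \emph{(i)}--\emph{(ii)}, so when $V$ is radial the argument is unchanged on the radial subspace.
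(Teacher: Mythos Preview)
Your proof is correct and follows the same overall perturbative strategy as the paper, but the inhomogeneous step is organized differently. The paper uses only the single Duhamel identity $u=T_Hf=T_0(f-Vu)$ and observes that the Kato estimate for $H$ applies \emph{directly} to $u=T_Hf$ (since $u$ is already the retarded $H$-solution with forcing $f$), giving $\|u/r\|_{L^2_{t,x}}\lec\|rf\|_{L^2_{t,x}}$ and hence $\|T_Hf\|_{L^2_tL^\infty_2}\lec\|f\|_{L^2_tL^{2,1}}$; it then dualizes this to $\|T_Hf\|_{L^2_tL^{2,-1}}\lec\|f\|_{L^2_tL^1_2}$ and feeds it back into the first line. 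You instead invoke the \emph{second} resolvent identity $T_Hf=T_0f+iT_H(VT_0f)$, so that the Kato estimate is applied with input $VT_0f$, which is already controlled in $rL^2_{t,x}$ via the $H_0$-Strichartz bound on $T_0f$. Both routes break the same circularity; the paper's is a line shorter (one Duhamel plus a duality step), while yours avoids duality at the cost of the second Duhamel expansion.
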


\begin{cor}\label{cor}
Let $V = V(|x|) \in C^1(\R^2 \backslash \{ 0 \})$ 
be a radially-symmetric function with $|x|^2 V \in L^\infty(\R^2)$,  
and suppose $H = -\Delta + V$ is self-adjoint on $L^2(\R^2)$.
Let $f(t,x) = f(t,|x|)$ be radial. Then
\begin{enumerate}
\item
the Kato estimate~\eqref{eq:KS} holds for $H$ if and only
if the double-endpoint estimate~\eqref{eq:DE} holds for $H$,
\item 
both estimates hold provided
\begin{equation}
\label{eq:repulsive}
  \inf_{r > 0} r^2 V(r) > 0, \quad
  \inf_{r > 0} -r^2 (r V(r))_r > 0.
\end{equation}
\end{enumerate}
\end{cor}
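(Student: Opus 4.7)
The plan splits into two tasks mirroring the statement: the equivalence in part (1), and the Kato estimate under \eqref{eq:repulsive} for part (2), the latter then combining with (1) to produce DE.

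For the direction \emph{DE implies Kato} in (1), I would use two continuous embeddings valid on radial functions,
\[
  \|r^{-1} u\|_{L^2_x} \lec \|u\|_{L^\I_2}, \qquad \|g\|_{L^1_2} \lec \|rg\|_{L^2_x},
\]
each obtained by Cauchy--Schwarz on a single dyadic annulus $\{r \sim 2^j\}$ followed by $\ell^2$-summation in $j$. Applying these to the Duhamel formula turns the DE bound $L^2_t L^1_2 \to L^2_t L^\I_2$ into exactly the Kato bound $rL^2_{t,x} \to r^{-1} L^2_{t,x}$. For the converse \emph{Kato implies DE}, I would invoke Lemma~\ref{thm:perturb} with reference operator $H_0 := -\De + m^2 r^{-2}$ for any fixed $m>0$. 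On radial functions $H_0$ coincides with $-\De_2^{(m)}$, so Theorem~\ref{thm:endpoint} (after a time-reversal to fix the sign convention in \eqref{eq:DE}) supplies DE for $H_0$. Since the perturbation $V - m^2 r^{-2}$ still satisfies $|x|^2(V - m^2 r^{-2}) \in L^\I$, Lemma~\ref{thm:perturb} applies and delivers DE for $H = H_0 + (V - m^2 r^{-2})$ from the assumed Kato estimate.

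For (2), the task reduces to proving Kato under \eqref{eq:repulsive}. I would run a radial Morawetz/commutator argument after the unitary reduction $u \mapsto w := r^{1/2} u$, which transforms $-\De + V$ on radial $L^2(\R^2)$ into $\ti H := -\p_r^2 + \ti V$ on $L^2((0,\I), dr)$ with $\ti V := V - 1/(4r^2)$, and rescales the source to $g := r^{1/2} F$; the Kato estimate then reads $\|r^{-1} w\|_{L^2_{t,r}} \lec \|rg\|_{L^2_{t,r}}$. With the self-adjoint multiplier $A := -\tfrac{i}{2}(r\p_r + \p_r r)$, a direct computation gives
\[
  \ip{w}{i[\ti H, A] w} = 2\|w_r\|_{L^2(dr)}^2 - \int_0^\I r\, \ti V_r\, |w|^2\, dr.
\]
Using $-r \ti V_r = V - (rV)_r - 1/(2r^2) \ge (c + c' - 1/2) r^{-2}$ from \eqref{eq:repulsive} (with $c, c'>0$ the infima), together with the 1D Hardy inequality $\|w_r\|_{L^2(dr)}^2 \ge \tfrac14 \|w/r\|_{L^2(dr)}^2$ on the half-line to absorb the possibly negative $-1/(2r^2)$ piece, one gets $\ip{w}{i[\ti H, A]w} \ge (c+c')\|w/r\|_{L^2(dr)}^2$. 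Integrating the Morawetz identity $\p_t \ip{w}{Aw} = \ip{w}{i[\ti H, A]w} + 2\Re\ip{g}{Aw}$ in time and controlling the boundary term by $|\ip{w}{Aw}| \lec \|w\|_{L^2(dr)}^2$ (itself bounded energetically) and the source by Cauchy--Schwarz in the form $|\ip{g}{Aw}| \lec \|rg\|_{L^2(dr)}(\|w_r\|_{L^2(dr)} + \|r^{-1}w\|_{L^2(dr)})$ then closes to produce the Kato estimate; part (1) yields DE.

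The main obstacle is arranging the commutator to carry the \emph{correct} scaling weight $r^{-2}$ rather than $r^{-3}$ (the latter being what the naive choice $\phi \equiv 1$ in $A = -\tfrac{i}{2}(\phi\p_r + \p_r\phi)$ would give). The choice $\phi(r) = r$ is forced, and with it the two halves of \eqref{eq:repulsive} combine precisely to deliver an $r^{-2}$ lower bound with slack, while the residual loss $-1/(2r^2)$ from the radial Hardy shift $-1/(4r^2)$ in $\ti V$ is exactly repaired by the sharp 1D Hardy inequality on the half-line. The need for both halves of \eqref{eq:repulsive} reflects the known failure of endpoint Strichartz for $-\De$ alone on $\R^2$.
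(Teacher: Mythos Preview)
Your part~(1) is correct and matches the paper: the embeddings $L^{2,1}_x \subset L^1_2$ and $L^\I_2 \subset L^{2,-1}_x$ give DE $\Rightarrow$ Kato, and Lemma~\ref{thm:perturb} with $H_0 = -\De_2^{(m)}$ (Theorem~\ref{thm:endpoint} supplying DE for $H_0$) gives the converse.

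Part~(2) has a genuine gap at the time-boundary term. You assert $|\ip{w}{Aw}| \lec \|w\|_{L^2(dr)}^2$, but with $A = -\tfrac{i}{2}(r\p_r + \p_r r)$ one computes $\ip{w}{Aw} = \Im\int_0^\I r\,\bar w\,w_r\,dr$, the dilation momentum, which is \emph{not} bounded by the mass: for $w(r) = e^{i\la r}\chi(r)$ with a fixed bump $\chi$ one gets $\ip{w}{Aw}\sim\la$. Since $A$ is the unbounded dilation generator, no $L^2$ or $H^1$ norm without a weight controls $\ip{w(T)}{Aw(T)}$ as $T\to\I$, and the argument does not close. Your commutator identity and its lower bound under \eqref{eq:repulsive}, as well as your source-term estimate $|\ip{g}{Aw}|\lec\|rg\|_{L^2}(\|w_r\|_{L^2}+\|w/r\|_{L^2})$, are correct; it is only the boundary contribution that fails.

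The paper sidesteps this by working on the \emph{stationary} problem: the same multiplier identity applied to $(H-\la)u=f$ (this is the method of \cite{BPST}, adapted to the 2D radial setting in \cite{GKT2}) yields the uniform resolvent bound \eqref{eq:resolvent} with no time-boundary term, and then the Kato estimate follows by Plancherel in $t$. Your commutator computation transports verbatim to that setting; replacing the time-dependent Morawetz by its resolvent version is precisely the missing step.
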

Since our linearized operator $H^s$ satisfies \eqref{eq:repulsive}, the above implies Lemma \ref{Str H}. 
\begin{proof}[Proof of Corollary \ref{cor}]
The first statement follows directly from
Theorem~\ref{thm:endpoint} and Lemma~\ref{thm:perturb}.
For the second statement:
the methods of~\cite{BPST}, adapted to the 2-dimensional
radial setting (detailed in~\cite{GKT2}), imply 
that conditions~\eqref{eq:repulsive} yield 
the resolvent estimate~\eqref{eq:resolvent}, hence
the Kato estimate, and the double endpoint estimate.
\end{proof}

\begin{rem}
While the double-endpoint estimate~\eqref{eq:DE} always implies the 
Kato estimate~\eqref{eq:KS}, the 
reverse implication does not hold in general. 
For example, consider $H = -\Delta$ acting on 
$2D$ functions with zero angular average.
The Kato estimate in this case can be verified,
for example, by using the methods of~\cite{BPST} to 
establish the resolvent estimate
\begin{equation}
\label{eq:resolvent}
  \sup_{\la\not=0}\|(H-\la)^{-1}\fy\|_{L^{2,-1}_x} 
  \lec \|\fy\|_{L^{2,1}_x},
\end{equation}
from which the Kato estimate follows by Plancherel in $t$
(see~\cite{GKT2} for details). 
On the other hand, if the double-endpoint estimate were to 
hold for zero-angular-average functions, so would
the endpoint homogeneous estimate. Since the 
latter is known to hold for radial functions (see Tao \cite{Tao}), 
it would therefore hold for all $2D$ functions, which is 
false (see Montgomery-Smith \cite{MT}, also see \cite{Tao}). 
Alternatively, a constructive counter-example is given 
by placing delta functions of the same mass but opposite sign 
at $(1,0)$ and $(0,1)$ in the plane. 
\end{rem}

\begin{proof}[Proof of Theorem~\ref{thm:endpoint}] 
Following \cite{Nrl}, we use the identity
\[
 \iint_{s<t} F(s,t) dsdt
 = C \int_0^\I \frac{dr}{r} \int_\R \frac{da}{r} \int_{a-3r}^{a-r} ds
 \int_{a+r}^{a+3r} dt F(s,t),
\]
for the decomposition, where $C>0$ is some explicit positive constant. 
Define the bilinear operators $I_j$ for $j\in\Z$ by 
\[
 I_j(f,g) := \int_{2^j}^{2^{j+1}} \frac{dr}{r} \int_\R \frac{da}{r}
 \int_{a-3r}^{a-r} ds \int_{a+r}^{a+3r} dt
 \PX{e^{i(t-s)\De_2^{(m)}}f(s)}{g(t)}
\]
where $f$ and $g$ are radial (i.e. $f(s) = f(|x|,s)$, etc.).

The desired estimate follows from 
\[
 \sum_{j\in\Z}|I_j(f,g)| \lec \|f\|_{L^2_t L^1_x} \|g\|_{L^2_t L^1_x}.
\]

Using the $1/t$ decay for 
$\| e^{i t \De_2^{(m)}} \|_{L^1 \to L^\I}$, 
we can easily bound the supremum of the summand. 
To get summability, we need decay both faster and slower than $1/t$. 
In fact we have, for $\fy = \fy(|x|)$ radial,
\EQ{ \label{decay est}
 \|e^{it\De_2^{(m)}}\fy\|_{L^{\I,\mu}_x} \lec
 |t|^{-1+\mu}\|\fy\|_{L^{1,-\mu}_x}, \quad -m\le\mu\le 1/2.}
This follows easily from the explicit fundamental solution 
\[
  (e^{it\De_2^{(m)}} \fy)(r) = \frac{c_m}{t}
  \int_0^\infty e^{i(r^2+\rho^2)/4t}
   J_m \left( \frac{r\rho}{2t} \right) \phi(\rho) \rho d\rho
\] 
($c_m$ a constant) in terms of the Bessel function $J_m$ of the first kind, for which
\[
  \sup_{s > 0} s^\mu J_m(s) < \infty,
  \quad -m \leq \mu \leq 1/2.
\]
\Del{The above estimate can be easily obtained by change of variables. 
First, it is trivial for $m=\mu=0$. 
For $-\mu=m\in\N/2$, the function $v(t,r):=r^{-m}e^{it\De_2^{(m)}}\fy$ satisfies the free Schr\"odinger on $\R^{1+d}$ with $d=2m+2$. 
For $\mu=m=1/2$, the function $v(t,r):=r^{1/2}e^{it\De^{1/2}}\fy$ satisfies the free Schr\"odinger on $\R$. 
In particular, we get the above estimates for $m=1/2$. 
For larger $m\in\N/2$, we should get the same estimates, because the equation for those $v$ have just some potentials like $+C/r^2$. 
For general $m\ge 0$, we should get the estimate by the complex interpolation.}

Next, when the decay is slower than $1/t$, namely if we choose $\mu>0$
in \eqref{decay est}, then we get a non-endpoint Strichartz 
estimate using the Hardy-Littlewood-Sobolev inequality in time:
\EQ{ \label{nonendpoint}
 \| \int_\R e^{-is\De_2^{(m)}} f(s) ds \|_{L^2_x}
 \lec \|f\|_{L^{p'}_t L^{1,-\al}_x},}
for $0<\al\le 1/2$ and  $1/p=1/2-\al/2$. 

Now the rest of the proof follows along the lines of Keel-Tao \cite{KT}. 
We will prove that
\EQ{ \label{piecewise est} 
 \pt |I_j(f,g)| \lec 2^{j(\al+\be)/2} \|f\|_{L^2_t L^{1,-\al}_x} \|g\|_{L^2_t L^{1,-\be}_x},}
for
\EQ{ \label{exp1}
 -m\le \al=\be <0}
and for 
\EQ{ \label{exp2}
 0 < \al, \be \le 1/2.}
For the first exponents \eqref{exp1}, we use the decay estimate
\eqref{decay est} and the $L^{\I,\al}-L^{1,-\al}$ duality at each $(s,t)$. Then we get
\[
\begin{split}
 |I_j(f,g)| & \lec \int_{2^j}^{2^{j+1}} \frac{dr}{r} \int_\R
 \frac{da}{r} \int_{a-3r}^{a-r} ds \int_{a+r}^{a+3r} dt
 \frac{\|f(s)\|_{L^{1,-\al}_x} \|g(t)\|_{L^{1,-\al}_x}}{|t-s|^{1-\al}} \\
 & \lec \int_{2^j}^{2^{j+1}} \frac{dr}{r} \int_\R \frac{da}{r}
 2^{j\al} \|f\|_{L^2_t(a-3r,a-r;L^{1,-\al}_x)}
 \|g\|_{L^2_t(a+r,a+3r;L^{1,-\al}_x)} \\
 & \lec \int_{2^j}^{2^{j+1}} \frac{dr}{r} 2^{j\al}\frac{1}{r}
 \|f\|_{L^2_{a,t}(-3r<t-a<-r;L^{1,-\al}_x)}
 \|g\|_{L^2_{a,t}(r<t-a<3r;L^{1,-\al}_x)} \\
 & \lec 2^{j\al}\|f\|_{L^2_t L^{1,-\al}_x} \|g\|_{L^2_t
 L^{1,-\al}_x},
\end{split}
\]
where we used H\"older for $s,t,a$. 

For the second exponents \eqref{exp2}, we use the non-endpoint
Strichartz \eqref{nonendpoint} for both integrals in $s$ and $t$,
after applying the Schwartz inequality in $x$. Then we get
\[
\begin{split}
 |I_j(f,g)| & \lec  \int_{2^j}^{2^{j+1}} \frac{dr}{r} \int_\R
 \frac{da}{r} \|f\|_{L^{p'}_t(a-3r,a-r;L^{1,-\beta}_x)}
 \|g\|_{L^{q'}_t(a+r,a+3r;L^{1,-\al}_x)} \\
 & \lec \int_{2^j}^{2^{j+1}} \frac{dr}{r} \int_\R \frac{da}{r}
 2^{j(\al+\be)/2}\|f\|_{L^2_t(a-3r,a-r;L^{1,-\beta}_x)}
 \|g\|_{L^2_t(a+r,a+3r;L^{1,-\al}_x)},
\end{split}
\]
and the rest is the same as above, where 
$1/p'=1/2+\al/2$ and $1/q'=1/2+\beta/2$. 
Thus we get \eqref{piecewise est} both for \eqref{exp1} and
\eqref{exp2}. 
By bilinear complex interpolation (cf. \cite{BL}), we can extend the 
region $(\al,\be)$ to the convex hull:
\EQ{ \label{convhul}
 \al>\frac{m}{m+\frac{1}{2}} \left(\be-\frac{1}{2} \right), 
 \pq \be>\frac{m}{m+\frac{1}{2}} \left(\al-\frac{1}{2} \right), 
 \pq \al,\be<\frac{1}{2}.}
The only property we need is that this set includes a neighborhood 
of $(0,0)$, where we are looking for the summability. 

Now we use bilinear interpolation 
(see \cite[Exercise 3.13.5(b)]{BL} and \cite{ON})
\[
\begin{split}
 & T:X_i\times X_j \to Y_{i+j} \pq (i,j,i+j\in\{0,1\}) \\
 & \implies T:X_{\th_1,r_1}\times X_{\th_2,r_2} \to Y_{\th_1+\th_2,r_0}
 \pq 1/r_0=1/r_1+1/r_2,
\end{split}
\]
where $X_{\th,r}:=(X_0,X_1)_{\th,r}$ denotes the real interpolation space. 

The above bound \eqref{piecewise est} can be written as
\[
 \|I(f,g)\|_{\ell^{-(\al+\be)/2}_\I} \lec \|f\|_{L^2_t L^{1,-\al}_x}
 \|g\|_{L^2_t L^{1,-\be}_x},
\]
where $\ell_p^{\al}$ denotes the weighted space over $\Z$:
\[
 \|a\|_{\ell_p^{\al}} := \|2^{j\al}a_j\|_{\ell^p_j(\Z)}.
\]
Hence the bilinear interpolation implies that
\EQ{ \label{sum est}
 \|I(f,g)\|_{\ell_1^{-(\al+\be)/2}} \lec \|f\|_{L^2_t L^{1,-\al}_2} \|g\|_{L^2_t L^{1,-\be}_2},}
for all $(\al,\be)$ in~\eqref{convhul}, where 
\[
 \|\fy\|_{L^{p,s}_q}^q := \sum_{k\in\Z} \|2^{ks}\fy\|_{L^p(|x|\sim
 2^k)}^q,
\]
and we used the interpolation property of weighted spaces (cf. \cite{BL}): 
\[
\begin{split}
 & (\ell_\I^{\al},\ell_\I^{\be})_{\th,q} = \ell_q^{(1-\th)\al+\th\be},
  \pq \al\not=\be, \\
  & (L^{p,\al},L^{p,\be})_{\th,q} = L^{p,(1-\th)\al+\th\be}_q,
  \pq \al\not=\be.
\end{split}
\]
By choosing $\al=\be=0$ in \eqref{sum est}, we get the desired result.
\end{proof}

\begin{proof}[Proof of Lemma \ref{thm:perturb}]
By time translation, we can replace the interval 
of integration in \eqref{eq:KS} and \eqref{eq:DE} by $(-\I,t)$. 
Then by taking the dual, we can also replace it by $(t,\I)$. Adding those two, we can replace it by $\R$. 
Then the standard $TT^*$ argument implies that
\[
 \|e^{iHt}\fy\|_{L^2_t(L^{2,-1})} \lec \|\fy\|_{L^2},
 \quad \|e^{iH_0t}\fy\|_{L^2_t(L^\I_2)} \lec \|\fy\|_{L^2}.
\]
Now let
\[
 u = \int_{-\I}^t e^{i(t-s)H} f(s) ds.
\]
Then the Duhamel formula for the equation
\[
 iu_t + H_0 u = f-Vu
\]
implies that
\[
 u = \int_{-\I}^t e^{i(t-s)H_0}(f-Vu)(s) ds.
\]
Applying \eqref{eq:KS} for $H$ and \eqref{eq:DE} for $H_0$, 
and using $L^{2,1}\subset L^1_2$, we get
\EQ{ \label{Kato to D.E}
 \|u\|_{L^2_t(L^\I_2)} \pt \lec \|f-Vu\|_{L^2_t(L^1_2)}
 \pr \lec \|f\|_{L^2_t(L^1_2)} + \|r^2 V\|_{L^\I_x} \|u\|_{L^2_t(L^{2,-1})}
 \pn \lec \|f\|_{L^2_t(L^{2,1})}.}
Then by duality we also get
\[
 \|u\|_{L^2_t(L^{2,-1})} \lec \|f\|_{L^2_t(L^1_2)}.
\]
Feeding this back into \eqref{Kato to D.E}, we get
\[
 \|u\|_{L^2_t(L^\I_2)} \lec \|f\|_{L^2_t(L^1_2)}.
\]
The estimate on $e^{iHt}\fy$ is simpler, or can be derived from the
above by the $TT^*$ argument.
\end{proof}


\appendix
\section{Landau-Lifshitz maps from $\S^2$} 
The same stability problem on $\S^2$, instead of $\R^2$, is much easier in the dissipative case, because the eigenfunctions get additional decay from the curved metric on $\S^2$. Indeed we have convergence for all $m\ge 1$: 
\begin{thm}
Let $m\ge 2$, $a\in\C$ and $\Re a> 0$. 
Then there exists $\de>0$ such that for any
$\M{u}(0,x)\in\Si_m$ with $\E(\M{u}(0))\le 4m\pi+\de^2$, 
we have a unique global solution $\M{u}\in C([0,\I);\Si_m)$
satisfying $\na\M{u}\in L^2_{t,loc}([0,\I);L^\I_x)$. 
Moreover, for some $\mu_\I \in \C$ we have 
\EQ{
 \|\M{u}(t)-e^{m\th R}\M{h}[\mu_\I]\|_{L^\I_x} + \E(\M{u}(t)-e^{m\th R}\M{h}[\mu_\I]) \to 0 \pq(t\to\I).}
\end{thm}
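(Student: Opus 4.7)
The plan is to follow the blueprint of Theorem~\ref{thm1} in the dissipative case, but to exploit the compactness of the domain $\S^2$ so as to avoid the delicate localized-orthogonality/normal-form device that was needed on $\R^2$. First I would write equivariant maps in polar coordinates $(\rh,\te)\in[0,\pi]\times[0,2\pi)$ on $\S^2$ as $\M{u}(\rh,\te)=e^{m\te R}\M{v}(\rh)$, and note that after stereographic projection the harmonic maps in $\Si_m$ are again of the form $\be z^m$, so we have a two-parameter family $\hmu$ entirely analogous to that on $\R^2$. The energy, the degree identity, and the inequality $\E\restriction_{\Si_m}\ge 4m\pi$ transfer to this setting essentially verbatim.

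The key simplification is that on $\S^2$ the parameter-derivative $d_\mu\hmu$ (the analog of $h_1^s$) is a smooth function on a compact manifold and so lies in every weighted space that appears in Lemma~\ref{bd on R}. Therefore I would decompose $\M{v}=\hmu+\res{v}$ using the \emph{plain} orthogonality $\ip{z}{h_1[\mu]}=0$, with no need for a compactly supported cut-off $\fy$, and establish the bijection $\M{v}\leftrightarrow(\mu,q)$ exactly as in Lemma~\ref{coord lem}. The generalized Hasimoto transform of Section~\ref{frame} then gives an equation $(\p_t-iS)q=-aL_{\M{v}}L_{\M{v}}^*q$ of the same shape as before, with $L_{\M{v}}$ now built from the round metric on $\S^2$.

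Since $\Re a>0$, the energy identity yields $\|q\|_{L^\I_tL^2_x}\lec\de$ and $L_{\M{v}}^*q\in L^2_tL^2_x$. The parameter equation derived in Section~\ref{parameter} has leading term $\dot\mu\sim-\ip{\Sc{M}\,aL_{\M{v}}^*q}{h_1[\mu]}$, and on $\S^2$ this inner product is directly in $L^1_t$ by Cauchy--Schwarz because $h_1[\mu]\in L^2_x(\S^2)$ without any weighted restriction: none of the slow-decay obstructions that forced the ``normal form'' treatment in Sections~\ref{normal form} and~\ref{m=3} are present. The remaining lower-order contributions are controlled, as in Section~\ref{parameter}, by the smallness of $\res{v}$. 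Hence $\dot\mu\in L^1_t$ and $\mu(t)\to\mu_\I\in\C$.

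Once $\mu(t)$ has converged, a standard perturbative argument using the spectral gap of the linearized operator $L_{\M{h}[\mu_\I]}L_{\M{h}[\mu_\I]}^*$ on the compact manifold $\S^2$ (zero eigenvalue spanned by $h_1[\mu_\I]$, with a positive gap to the rest of the spectrum) yields $\|q(t)\|_{L^2_x}\to 0$, whence $\|\res{v}(t)\|_X\to 0$ via the inversion of Section~\ref{splitting}; the $L^\I$ convergence follows from $X\subset L^\I$ and the energy convergence from $\E(\M{v})-4m\pi\sim\|q\|_{L^2_x}^2\to 0$. The only real obstacle is bookkeeping: one must set up the equivariant reduction, the frame field $\V{e}$, and the harmonic-map family on $\S^2$ so that the algebraic identities of Sections~\ref{frame}--\ref{splitting} transcribe cleanly, and verify that Lemma~\ref{bd on R} has a (simpler) analog on $\S^2$ with $h_1[\mu]$ in place of $\fy^s$. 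The analytic heart of the $\R^2$ argument is essentially trivialized by compactness, which removes the $r\to\I$ slow-decay issue that drove the normal-form and endpoint-Strichartz machinery.
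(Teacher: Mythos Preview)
Your high-level strategy---carry the $\R^2$ machinery over and let the compactness of $\S^2$ kill the slow-decay obstructions---is exactly what the paper does, and you are right that the normal-form device of Sections~\ref{normal form}--\ref{m=3} is not needed. But your stated mechanism for $\dot\mu\in L^1_t$ does not work. From the energy identity you only get $L_{\M{v}}^*q\in L^2_tL^2_x$, so Cauchy--Schwarz on $\ip{\Sc{M}aL_{\M{v}}^*q}{h_1[\mu]}$ gives $\dot\mu\in L^2_t$, not $L^1_t$; the membership $h_1\in L^2(\S^2)$ buys nothing here (it was already in $L^2(\R^2)$ for $m\ge 1$). The paper closes this gap with two observations you are missing. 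First, after stereographic projection the equation reads $q_t=iSq-aL_{\M{v}}g^{-1}L_{\M{v}}^*q$ with $g=(1+r^2/4)^{-2}$, and the energy identity gives $g^{-1/2}L_{\M{v}}^*q\in L^2_{t,x}$; since $g^{-1/2}\gec r$, one deduces via $R^{s*}_\fy$ that $\|q\|_{L^2_x}\lec\|rL_{\M{v}}^*q\|_{L^2_x}$, hence $q\in L^2_tL^2_x$---a strictly stronger bound than anything available on $\R^2$. Second, in the $\dot\mu$ equation one integrates by parts \emph{in space} using $L^sh_1^s=0$: the leading term becomes $\ip{(\Sc{M}_r+\tfrac{m\res v_3}{r}\Sc{M})aq}{h_1^s}$, which is quadratic in the remainder and bounded by $\|q\|_{L^2_x}^2$. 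Combining these two facts gives $\|\dot\mu\|_{L^1_t}\lec\de^2$ directly, with no appeal to the time-integration normal form.

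Two smaller points. Your orthogonality $\ip{z}{h_1[\mu]}$ should be taken with the $\S^2$ measure, which in stereographic coordinates is $\ip{z}{gh_1^s}$; it is the factor $g\sim\LR{r}^{-4}$ (not compactness per se) that puts the test function in the spaces required by Lemma~\ref{bd on R}. And in your final step, $h_1[\mu_\I]$ lies in the kernel of $(L^s)^*L^s$, not of $L^sg^{-1}(L^s)^*$ (the operator acting on $q$); the latter actually has trivial kernel on $L^2(\S^2)$, so a spectral-gap argument can be made to work, but the paper instead just uses $q\in L^2_tL^2_x$ together with the dissipative decay argument of Section~\ref{decay}.
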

Our proof does not give a uniform bound on $\de$ if
$m=1$, but we have 
\begin{thm}
Let $m=1$, $a\in\C$, $\Re a> 0$ and $\mu_0\in\C$. 
Then there exists $\de>0$ such that for any
$\M{u}(0,x)\in\Si_1$ with $\E(\M{u}(0)-\M{h}[\mu_0])\le \de^2$, 
we have a unique global solution $\M{u}\in C([0,\I);\Si_1)$
satisfying $\na\M{u}\in L^2_{t,loc}([0,\I);L^\I_x)$. 
Moreover, for some $\mu_\I \in \C$ we have 
\EQ{
 \|\M{u}(t)-e^{m\th R}\M{h}[\mu_\I]\|_{L^\I_x} + \E(\M{u}(t)-e^{m\th R}\M{h}[\mu_\I]) \to 0 \pq(t\to\I).}
\end{thm}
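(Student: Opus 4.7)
The plan is to run the framework developed for Theorem~\ref{thm1} (as adapted to $\S^2$ in the sketch preceding this statement for $m\ge 2$) in a small neighborhood of $\M{h}[\mu_0]$ rather than uniformly in the harmonic map family. Since $\Re a>0$ the problem is purely dissipative, so we only need an $L^2_t$ decay estimate on the remainder together with integrability of $\dot\mu$, both closed by a bootstrap in $\mu(t)$.

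First I would fix a cut-off $\fy\in C_0^\I(0,\I)$ with $\ip{\fy}{h_1}=1$, adapted to the scale $s_0=e^{\Re\mu_0}$, and write $\M{v}(t)=\M{h}[\mu(t)]+\res{v}(t)$ with the localized orthogonality $\ip{z}{\fy^{s(t)}}=0$ of Section~\ref{splitting}. Under the a priori assumption $|\mu(t)-\mu_0|\lec 1$, the coordinate-change estimates of Lemma~\ref{coord lem}, the right-inverse bounds of Lemma~\ref{bd on R} (with the natural weights inherited from the $\S^2$ metric), and the generalized Hasimoto transform of Section~\ref{frame} all transfer with constants depending only on $\mu_0$, yielding an equation for $q$ of the form~\eqref{qeq}. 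The key analytic input is that on the compact surface $\S^2$ the linearized operator $H^{s_0}=L^{s_0}(L^{s_0})^*$ has discrete spectrum with simple kernel $\langle h_1^{s_0}\rangle$, so a spectral gap $\lambda_0=\lambda_0(\mu_0)>0$ is automatic, and the localized orthogonality (up to quadratic corrections controllable by smallness) forces $q$ to live in the gapped part.

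Next, exploiting the dissipative energy identity $\p_t\|q\|_{L^2}^2=-2a_1\|L_{\M{v}}^*q\|_{L^2}^2$ together with the spectral gap, I would conclude exponential (in particular $L^2_t\cap L^\I_t$) decay of $q$ in $L^2_x$, and hence the full estimate $q\in L^2_tX$ as in Section~\ref{decay}. With this control on the remainder, the parameter evolution equation of Section~\ref{parameter} and the normal-form partial integration of Section~\ref{normal form} apply essentially verbatim, since on the compact $\S^2$ the slow-decay obstruction responsible for the condition $m\ge 3$ on $\R^2$ does not arise: all relevant eigenfunctions are in every $L^p(\S^2)$. One obtains $\dot\mu\in L^1_t$, hence $\mu(t)\to\mu_\I\in\C$, and the uniform and energy-norm convergence of $\M{u}(t)$ to $e^{\th R}\M{h}[\mu_\I]$ follows as in the proof of Theorem~\ref{thm1}.

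The main obstacle, and the reason the theorem is only local in $\mu_0$ when $m=1$, is the degeneration of the $m=1$ harmonic map family: as $|\Re\mu|\to\I$ the image of $\M{h}[\mu]$ concentrates at a pole of $\S^2$, so both the spectral gap $\lambda_0(\mu_0)$ and the operator norms appearing in Lemmas~\ref{coord lem} and~\ref{bd on R} deteriorate. The argument must therefore be closed by a continuity/bootstrap: on any maximal interval $[0,T)$ where $|\mu(t)-\mu_0|$ remains within a small fixed constant, the estimates above give $\|\dot\mu\|_{L^1_t([0,T))}\ll 1$, which in turn reproduces and strictly extends the bootstrap hypothesis. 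Choosing $\de$ small relative to the $\mu_0$-dependent constants closes the loop, yielding global existence together with convergence to some $\M{h}[\mu_\I]$ satisfying $|\mu_\I-\mu_0|\lec\de$.
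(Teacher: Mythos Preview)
Your bootstrap strategy---dissipative $L^2_t$ control of $q$, then $\dot\mu\in L^1_t$, closed by continuity in $\mu$ with $\de=\de(\mu_0)$---is the right shape, but your route differs from the paper's and one technical claim is wrong.

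The paper does \emph{not} carry the localized orthogonality or the normal-form machinery of Sections~\ref{splitting} and~\ref{normal form} over to $\S^2$. Instead it uses the \emph{standard} orthogonality $\ip{z}{gh_1^s}=0$, which is available precisely because the metric weight $g=(1+r^2/4)^{-2}$ supplies the missing decay: $gh_1\in\LR{r}^{-1}L^1$ for every $m\ge 1$. Differentiating this condition and integrating once in $r$ via $L^sh_1^s=0$ already makes the leading term in $\dot\mu$ quadratic in $q$, hence directly in $L^1_t$; no normal form is needed. The $\mu_0$-dependence then enters concretely through $\|\phi_s\|_{L^1_2}\sim\max(s^{-1},1)$ (with $\phi_s=s^2g(rs)h_1/\ip{gh_1^s}{h_1^s}$) in the $z$-estimate and through the factor $\ip{h_1^s}{gh_1^s}$ on the left of the $\dot\mu$ equation, rather than through a degenerating spectral gap as you suggest.

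Your assertion that $H^{s_0}=L^{s_0}(L^{s_0})^*$ has kernel $\langle h_1^{s_0}\rangle$ is incorrect: $h_1^s$ lies in $\ker L^s$ and hence in $\ker(L^{s*}L^s)$, not in $\ker(L^sL^{s*})$. In fact the $\S^2$ operator $L_{\M{v}}g^{-1}L_{\M{v}}^*$ acting on $q$ has \emph{trivial} $L^2$-kernel, so the ``gap'' you want is just the Poincar\'e inequality $\|q\|_{L^2}\lec\|g^{-1/2}L_{\M{v}}^*q\|_{L^2}$, which the paper obtains from $g^{-1/2}\gec r$ together with the uniform bound on $R_\fy^{s*}$. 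This also means your sentence about the orthogonality ``forcing $q$ to live in the gapped part'' is vacuous: $q$ decays regardless of any condition on $z$; the orthogonality only serves to fix $\mu$. The error is therefore harmless to the final conclusion, but it obscures the actual mechanism.
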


The proof is essentially a small subset of that in the $\R^2$ case, so we just indicate necessary modifications. 
\begin{proof}[Outline of Proof]
By the stereographic projection, we can translate the problem to $\R^2$ with the metric $g(x)dx^2$, where $g(x)=(1+r^2/4)^{-2}$. 
The harmonic maps are the same, while the evolution equation is changed to 
\EQ{
 \pt q_t = i S q - a L_{\M{v}} g^{-1} L_{\M{v}}^* q,
 \pq S = \int_\I^r g^{-1}(q+\frac{m}{r}\nu)\cdt ia L_{\M{v}}^*q dr.}
In this setting we can use the ``standard" orthogonality to decide $\mu$:
\EQ{ \label{orth}
 0 = \ip{z}{gh_1^s},}
since $gh_1\in\LR{r}^{-1}L^1$. 
The energy identity 
\EQ{
 \p_t\|q\|_{L^2_x}^2 = -2a_1 \ip{g^{-1}L_{\M{v}}^*q}{L_{\M{v}}^*q}}
implies the a priori bound on $q$:
\EQ{
 \|q\|_{L^\I_t L^2_x} + \|g^{-1/2}L_{\M{v}}^*q\|_{L^2_t L^2_x} \lec \|q(0)\|_{L^2_x} \sim \de.}
Since $g^{-1/2}\gec r$, we get (by using $q=R_\fy^{s*}L^{s*}q$ as on $\R^2$), 
\EQ{
 \|q\|_{L^2_x} \lec \|rL_{\M{v}}^*q\|_{L^2_x} \lec \|g^{-1/2}L_{\M{v}}^*q\|_{L^2_x} \in L^2_t.}
Then by the orthogonality \eqref{orth} we have $z=R_{\phi_s}^s L^sz$ with  
\EQ{
 \phi_s := s^2g(rs)h_1/\ip{gh_1^s}{h_1^s},}
and so
\EQ{
 \|z/r\|_{L^2_x} \lec \|L^s z\|_{L^2_x} \|\phi_s\|_{L^1_2}.}
Since 
\EQ{
 \int_0^\I g(rs)\min(r,1/r)^{-m} rdr \sim \CAS{ \min(1,s^{-2}) &(m>2) \\ \min(s^{-1},s^{-2}) &(m=1)},}
we have 
\EQ{
 \|\phi_s\|_{L^1_2} \sim \CAS{ 1 & (m\ge 2) \\ \max(s^{-1},1) &(m=1)}.} 
Anyway, if $\de$ is small enough (depending on $s$), we get by the same argument as on $\R^2$, 
\EQ{
 \|z\|_X \lec \|q\|_{L^2_x} \in L^2_t \cap L^\I_t.} 

Differentiating the orthogonality, we get
\EQ{ \label{eq mu S^2}
 \dot\mu\ip{h_1^s}{gh_1^s} \pt= -\ip{\Sc{M}aL_{\M{v}}^*q}{h_1^s} - \ip{gz}{(\frac{\dot\mu_1}{m}r\p_r+i\dot\mu_2h_3^s)h_1^s}
 \pr=-\ip{(\Sc{M}_r+\frac{m\res{v}_3}{r}\Sc{M})aq}{h_1^s} 
 \prQ- \ip{gz}{\{\frac{\dot\mu_1}{m}(r\p_r+m)+i\dot\mu_2(h_3^s-1)\}h_1^s},}
where on the second equality we used that $L^sh_1^s=0$ and $\ip{gz}{h_1^s}=0$. 
Using that
\EQ{
 |(r\p_r +m)h_1|+|(h_3-1)h_1| \lec \min(r^{m-1},r^{-3m-1}) \lec \LR{r}^{-4},}
we can bound the last term in \eqref{eq mu S^2} by
\EQ{
 |\dot\mu|\|z\|_{L^\I_x}\min(s^2,1),}
which is much smaller than the term on the left. 
The second last term in \eqref{eq mu S^2} is bounded at each $t$ by 
\EQ{
 \|q\|_{L^2_x}^2 \|h_1^s\|_{L^\I_x} \lec \|q\|_{L^2_x}^2.}
If $m>1$, we can improve this for $s<1$ as follows. 
By the same argument as on $\R^2$, we have
\EQ{
 \|q\|_{L^\I_{2,x}} \lec \|L_{\M{v}}^*q\|_{L^2_x} \lec \|g^{-1/2}L_{\M{v}}^*q\|_{L^2_x},}
where we need $m>1$ for the boundedness of $R_{\phi_s}^s:r^2L^1_2\to rL^\I_2$ and $R_\fy^{s*}:rL^1_2\to L^\I_2$. 
Then we can replace the above estimate in the region $r<1$ by 
\EQ{
 \|q\|_{L^\I_2\cap L^2_x} \|h_1^s\|_{L^1_\I+L^\I_x} \lec \|q\|_{L^2_x}^2\min(s^2,1).}
Thus we obtain
\EQ{
 \|\dot\mu\|_{L^1_t} \lec \CAS{\de^2 &(m\ge 2) \\ C(s)\de^2 &(m=1)},}
and hence if $\de>0$ is small enough, we get the desired convergence as on $\R^2$. 
\end{proof} 

\section*{Acknowledgments} 
This work was conducted while the second author was visiting the
University of British Columbia, by the support of the 21st century COE
program, and also by the support of the Kyoto University Foundation.
The research of Gustafson and Tsai is partly supported by NSERC grants
no. 251124-07 and 261356-08. The research of Nakanishi was partly 
supported by the JSPS grant no.~15740086.


\bigskip

\noindent{Stephen Gustafson},  gustaf@math.ubc.ca \\
Department of Mathematics, University of British Columbia, 
Vancouver, BC V6T 1Z2, Canada

\bigskip

\noindent{Kenji Nakanishi},
n-kenji@math.kyoto-u.ac.jp \\
Department of Mathematics, Kyoto University,
Kyoto 606-8502, Japan

\bigskip

\noindent{Tai-Peng Tsai},  ttsai@math.ubc.ca \\
Department of Mathematics, University of British Columbia, 
Vancouver, BC V6T 1Z2, Canada

\end{document}